\begin{document}

\newcommand{\rdg}{\hfill $\Box $}

\newtheorem{definition}{Definition}[section]
\newtheorem{theorem}[definition]{Theorem}
\newtheorem{proposition}[definition]{Proposition}
\newtheorem{corollary}[definition]{Corollary}
\newtheorem{remark}[definition]{Remark}
\newtheorem{example}[definition]{Example}
\newtheorem{lemma}[definition]{Lemma}
\newcommand{\tp}{\otimes}
\newcommand{\N}{\mathbb{N}}
\newcommand{\Z}{\mathbb{Z}}
\newcommand{\K}{\mathbb{K}}
\newcommand{\op}{\oplus}
\newcommand{\n}{\underline n}
\newcommand{\es}{{\frak S}}
\newcommand{\ef}{\frak F}
\newcommand{\qu}{\frak Q} \newcommand{\ga}{\frak g}
\newcommand{\la}{\lambda}
\newcommand{\ig}{\frak Y}
\newcommand{\te}{\frak T}
\newcommand{\cok}{{\sf Coker}}
\newcommand{\Hom}{{\sf Hom}}
\newcommand{\Image}{{\sf Im}}
\newcommand{\Ker}{{\sf Ker}}
\newcommand{\Coker}{{\sf Coker}}
\newcommand{\ext}{{\sf Ext}}
\newcommand{\id}{{\sf id}}

\newcommand{\ele}{\cal L} \newcommand{\as}{\cal A} \newcommand{\ka}{\cal K}\newcommand{\eme}{\cal M} \newcommand{\pe}{\cal P}

\newcommand{\pn}{\par \noindent}
\newcommand{\pbn}{\par \bigskip \noindent}
\bigskip\bigskip

\centerline{\bf \large A non-abelian Hom-Leibniz tensor product and applications}
\bigskip

\centerline{\bf J. M. Casas$^{(1)}$, E. Khmaladze$^{(2)}$ and N. Pacheco Rego$^{(3)}$}


\bigskip \bigskip
\centerline{$^{(1)}$Dpto.  Matem\'atica Aplicada I, Univ. de Vigo, 36005 Pontevedra, Spain}
\centerline{e-mail address: \tt jmcasas@uvigo.es}
\medskip

\centerline{$^{(2)}$A. Razmadze Math. Inst. of I. Javakhishvili Tbilisi State University,}
\centerline{Tamarashvili Str. 6, 0177 Tbilisi, Georgia}
\centerline{e-mail address: \tt e.khmal@gmail.com}
\medskip

\centerline{$^{(3)}$IPCA, Dpto. de Ciências, Campus do IPCA,
 Lugar do Aldão}
\centerline{4750-810 Vila Frescainha, S. Martinho, Barcelos,
 Portugal}
\centerline{e-mail address: \tt natarego@gmail.com}

\numberwithin{equation}{section}

\bigskip \bigskip
\begin{abstract}
The notion of non-abelian Hom-Leibniz  tensor product is introduced and some properties are established. This tensor product is used in the description of the universal ($\alpha$-)central extensions of Hom-Leibniz algebras. We also give its application to the Hochschild homology of Hom-associative algebras.
\end{abstract}

\bigskip \bigskip
{\bf A. M. S. Subject Class. (2010):} 17A30, 17B55, 17B60, 18G35, 18G60

 {\bf Key words:} Hom-Leibniz algebra, non-abelian tensor product, universal ($\alpha$)-central extension, Hom-associative algebra, Hochschild homology.


\section{Introduction}

Since the invention of Hom-Lie algebras { as the algebraic model of deformed Lie algebras coming from twisted discretizations of vectors fields} by Hartwig, Larsson and Silvestrov in \cite{HLS}, many papers appeared dealing with  Hom-type generalizations of various algebraic structures (see  for instance \cite{AMM, BM, CIP1, ChS, ChSu, MS, MS3, MS2, Sheng, Yau, Yau1} and related references given therein). In particular, Makhlouf and Silvestrov introduced the notion of Hom-Leibniz algebras in \cite{MS}, which simultaneously is a non-commutative generalization of Hom-Lie algebras and Hom-type generalization of Leibniz algebras. In this generalized framework, it is natural to seek for possible extensions of results in the categories of Lie or Leibniz algebras to the categories of Hom-Lie or Hom-Leibniz algebras.

Recently, in \cite{CKP}, we developed the non-abelian Hom-Lie tensor product, extending the non-abelian Lie tensor product by Ellis \cite{El1} from Lie to Hom-Lie algebras.  It has applications in universal ($\alpha$-)central extensions of Hom-Lie algebras and cyclic homology of Hom-associative algebras.

In this paper we have chosen to work with Hom-Leibniz algebras. This work is a continuation and non-commutative generalization of the work already begun in  \cite{CKP}. Thus, we introduce a non-abelian Hom-Leibniz tensor product, extending the non-abelian Leibniz tensor product by Gnedbaye \cite{Gn}, which itself is the Leibniz algebra (non-commutative) version of the non-abelian Lie tensor product \cite{El1, InKhLa}. Then we investigate properties of the non-abelian Hom-Leibniz tensor product (Section \ref{section3}) and give its applications to universal ($\alpha$-)central extensions of Hom-Leibniz algebras (Section \ref{section4}) and Hochschild  homology of Hom-associative algebras (Section \ref{section5}).

One observes that not all results can be transferred from Leibniz to Hom-Leibniz algebras. For example, results on universal central extensions of Leibniz algebras can not be extended directly to Hom-Leibniz algebras  because the category of Hom-Leibniz algebras doesn't satisfy the so-called universal central extension condition \cite{CVdL}, which means that the composition of central extensions is not central in general \cite{CIP1}. By this reason, the notion of $\alpha$-central extension of Hom-Leibniz algebras is introduced in \cite{CIP1}, and classical results are divided between universal central and universal $\alpha$-central extensions of Hom-Leibniz algebras (see Theorem \ref{teorema}). Further, Hom-type version of Gnedbaye's result relating Hochschild and Milnor type Hochschild homology of associative algebras \cite{Gn}, doesn't hold for all Hom-associative algebras and requires an additional condition (see Theorem \ref{application}).

\subsection*{Notations} Throughout this paper we fix $\mathbb{K}$ as a ground field. Vector spaces are considered over $\mathbb{K}$ and linear maps are $\mathbb{K}$-linear maps. We write $\otimes$ for the tensor product $\otimes_\mathbb{K}$  over $\mathbb{K}$. For any vector space (resp. Hom-Leibniz algebra) $L$, a subspace (resp. a two-sided ideal) $L'$ and $x\in L$ we write $\overline{x}$ to denote the coset $x+L'$. We denote by {\sf{Lie}} and {\sf{Lb}} the categories of Lie and Leibniz algebras, respectively.


\section{Preliminaries on Hom-Leibniz algebras}

\subsection{Basic definitions}

In this section we review some terminology on Hom-Leibniz algebras and recall notions used in the paper. We also
 introduce notions of actions and semi-direct product of  Hom-Leibniz algebras.

\begin{definition}\label{HomLeib} \cite{MS}
A Hom-Leibniz algebra is a triple $(L,[-,-],\alpha_L)$ consisting of a vector space $L$, a bilinear map $[-,-] : L \times L \to L$, called bracket operation, and a linear map $\alpha_L : L \to L$ satisfying:
\begin{equation} \label{def}
 [\alpha_L(x),[y,z]]=[[x,y],\alpha_L(z)]-[[x,z],\alpha_L(y)] \ \ ({\text{Hom-Leibniz identity}})
\end{equation}
for all $x, y, z \in L$.
\end{definition}

In the whole paper we only deal with (the so-called multiplicative \cite{Yau1}) Hom-Leibniz algebras  $(L,[-,-],\alpha_L)$ such that $\alpha_L$ preserves the bracket operation, that is, $\alpha_L [x,y] = [\alpha_L(x),\alpha_L(y)]$, for all $x, y \in L$.

\begin{example}\label{ejemplo 1} \
\begin{enumerate}
\item[a)] Taking $\alpha = \id$ in Definition \ref{HomLeib}, we obtain the definition of a Leibniz algebra \cite{Lo}. Hence any Leibniz algebra $L$ can be considered as a Hom-Leibniz algebra with $\alpha_L=\id$.

\item[b)] Any Leibniz algebra $L$ can be considered as a Hom-Leibniz algebra with $\alpha_L=0$.  In fact, any vector space L endowed with any bracket operation is a Hom-Leibniz algebra with $\alpha_L=0$.
    \item[c)] Any  Hom-Lie algebra \cite{HLS} is a Hom-Leibniz algebra whose bracket operation satisfies the skew-symmetry condition.

    \item[d)] Any Hom-dialgebra \cite{Yau} $(D,\dashv,\vdash, \alpha_D)$ becomes a Hom-Leibniz algebra
    $(D,$ $[-,-],\alpha_D)$ with the bracket given by $[x,y]=x \dashv y - y \vdash x$, for all $x,y \in D$.

  \item[e)] Let $(L,[-,-])$ be a Leibniz algebra and $\alpha_L:L \to L$  a Leibniz algebra endomorphism. Define $[-,-]_{\alpha} : L \otimes L \to L$ by $[x,y]_{\alpha} = [\alpha(x),\alpha(y)]$, for all $x, y \in L$. Then $(L,[-,-]_{\alpha}, \alpha_L)$ is a Hom-Leibniz algebra.

\item[f)] Any Hom-vector space $(V,\alpha_V)$, (i.e. $V$ is a vector space and $\alpha_V:V\to V$ is a linear map) together with the trivial bracket $[-,-]$ (i.e. $[x,y]=0$ for all $x,y\in V$) is a Hom-Leibniz algebra, called {abelian  Hom-Leibniz algebra}.

\item[g)]  { The two-dimensional $\mathbb{C}$-vector space $L$ with basis $\{ a_1, a_2 \}$, endowed with the bracket operation  $[a_2, a_2]= a_1$ and zero elsewhere, and the endomorphism $\alpha_L$ given by the matrix $\left( \begin{array}{cc} 1 & 1 \\0 & 1 \end{array} \right)$ is a non-Hom-Lie Hom-Leibniz algebra.}
\end{enumerate}
\end{example}

In the sequel we shall use the shortened notation $(L,\alpha_L)$ for $(L,[-,-],\alpha_L)$.
\begin{definition}\label{homo}
A homomorphism of Hom-Leibniz algebras $f:(L,\alpha_L) \to (L',\alpha_{L'})$ is a linear map $f : L \to L'$ such that
\begin{align*}
&f([x,y]) =[f(x),f(y)],\\
&f \circ \alpha_L = \alpha_{L'} \circ  f,
\end{align*}
for all $x, y \in L$.
\end{definition}

Hom-Leibniz algebras and  their  homomorphisms form  a category, which we denote by ${\sf{HomLb}}$. Example \ref{ejemplo 1} a), b) and c) say that there are full embedding functors \[
I_0,I_1:{\sf{Lb}}\longrightarrow {\sf{HomLb}}, \quad I_0(L)= (L,0), \ I_1(L)= (L,\id)
 \]
 and
 \[
 {\sf{inc}:\sf{HomLie}}\longrightarrow {\sf{HomLb}}, \quad (L,\alpha_L)\mapsto (L,\alpha_L),
 \]
 where ${\sf{HomLie}}$ denotes the category of Hom-Lie algebras. Note that the restrictions of $I_0$ and $I_1$ to the category of Lie algebras are full embeddings
 \[
I'_0,I'_1:{\sf{Lie}}\longrightarrow {\sf{HomLie}}, \quad I'_0(L)= (L,0), \ I'_1(L)= (L,\id).
 \]

\begin{definition}
 A  Hom-Leibniz subalgebra $(H, \alpha_H)$ of a Hom-Leibniz algebra $(L,\alpha_L)$ consists of a vector subspace $H$ of $L$, which is closed under the bracket, together with the linear map $\alpha_H:H\to H$ being the restriction of $\alpha_L$ on $H$. In such a case we may write $\alpha_{L \mid}$ for $\alpha_H$.

A  Hom-Leibniz subalgebra $(H, \alpha_H)$ of $(L, \alpha_L)$ is said to be a  two-sided Hom-ideal if $[x,y], [y,x] \in H$, for all $x \in H, y \in L$.

If $(H,\alpha_H)$ is a two-sided  Hom-ideal of $(L,\alpha_L)$, then the quotient vector space $L/H$ together with the endomorphism $\overline{\alpha}_L : L/H  \to L/H$ induced by $\alpha_L$, naturally inherits a structure of Hom-Leibniz algebra, and it is called the quotient Hom-Leibniz algebra.

The commutator of two-sided Hom-ideals
$(H, \alpha_{L \mid})$ and $(K, \alpha_{L\mid })$ of a Hom-Leibniz algebra $(L,\alpha_L)$, denoted by $([H,K],\alpha_{L\mid })$, is the Hom-Leibniz subalgebra of $(L,\alpha_L)$ spanned by the brackets $[h,k]$ and $[k, h]$ for all $h \in H$, $k \in K$.
\end{definition}

The following lemma can be readily checked.
\begin{lemma}\label{ideales} Let $(H,\alpha_{H})$ and $(K,\alpha_{K})$ be two-sided Hom-ideals of a Hom-Leibniz algebra
$(L,\alpha_L)$. The following statements hold:

\begin{enumerate} \label{ideales}

\item[a)] $(H \cap K, \alpha_{L \mid})$ and $(H + K, \alpha_{{L \mid}})$ are two-sided Hom-ideals of $(L,\alpha_{L})$;

\item[b)]  $[H,K] \subseteq H \cap K$;

\item[c)] If $\alpha_{L}$ is surjective, then $([H,K],\alpha_{L \mid})$ is a two-sided Hom-ideal of $(L,\alpha_L)$;

\item[d)] $([H,K],\alpha_{L \mid})$ is a two-sided Hom-ideal of $(H,\alpha_{H})$  and $(K,\alpha_{K})$. In particular, $([L,L],\alpha_{L \mid})$ is a two-sided Hom-ideal of $(L,\alpha_L)$;

\item[e)] $(\alpha_L(L),\alpha_{L {\mid}})$ is a Hom-Leibniz subalgebra of $(L,\alpha_{L})$;

\item[f)] If $H, K \subseteq \alpha_{L}(L)$, then $([H, K],\alpha_{L \mid})$ is a two-sided Hom-ideal of $(\alpha_{L}(L),\alpha_{{L \mid}})$.
\end{enumerate}
\end{lemma}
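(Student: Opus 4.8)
The plan is to verify each of the six statements (a)--(f) by direct, mostly routine computations, invoking the Hom-Leibniz identity \eqref{def} and the multiplicativity of $\alpha_L$ only where needed, and treating the first few items as warm-ups for the genuinely substantive ones (c) and (f).

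For (a), I would check that $H\cap K$ and $H+K$ are closed under brackets with arbitrary elements of $L$ (immediate from $H,K$ being two-sided Hom-ideals) and that $\alpha_L$ maps each of them into itself; since $\alpha_L(H)\subseteq H$ and $\alpha_L(K)\subseteq K$ by definition of a two-sided Hom-ideal, this is formal. For (b), note $[H,K]$ is spanned by elements $[h,k]$ and $[k,h]$ with $h\in H$, $k\in K$; each such element lies in $H$ because $K$ is an ideal and in $K$ because $H$ is an ideal, so the span lies in $H\cap K$. For (d), I would show $[[H,K],L]$ and $[L,[H,K]]$ are contained in $[H,K]$: expanding a bracket like $[[h,k],\alpha_L(z)]$ via \eqref{def} rewrites it as $[\alpha_L(h),[k,z]]\pm[[h,z],\alpha_L(k)]$, and each summand is again of the spanning form for $[H,K]$ (using that $[k,z]\in K$, $[h,z]\in H$, and $\alpha_L$ preserves $H$ and $K$); the opposite-order brackets are handled the same way, possibly after first using \eqref{def} to move an $\alpha_L$ onto the outside argument. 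The key subtlety here — and this is why the hypothesis ``$\alpha_L$ surjective'' appears in (c) but not in (d) — is that to rewrite $[z,[h,k]]$ one needs to express $z$ (or the relevant element) as $\alpha_L$ of something, which is only possible inside $\alpha_L(L)$; this is exactly what makes (d) and (f) work unconditionally while (c) needs surjectivity. For (e), closure of $\alpha_L(L)$ under the bracket is precisely multiplicativity, $[\alpha_L(x),\alpha_L(y)]=\alpha_L[x,y]\in\alpha_L(L)$, and $\alpha_L$ restricts to $\alpha_L(L)$ since $\alpha_L(\alpha_L(L))\subseteq\alpha_L(L)$.

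The main obstacle is getting (c) and (f) right, because there the ambient algebra in which we want $([H,K],\alpha_{L\mid})$ to be an ideal is $(L,\alpha_L)$ (resp.\ $(\alpha_L(L),\alpha_{L\mid})$), and we must show $[[H,K],L]\subseteq[H,K]$ \emph{and} $[L,[H,K]]\subseteq[H,K]$ together with $\alpha_L([H,K])\subseteq[H,K]$. The bracket-closure on the side where the Hom-Leibniz identity puts the iterated bracket ``inside'' is the easy direction, as in (d); the hard direction is a bracket of the form $[z,[h,k]]$ with $z\in L$ arbitrary. For (c) I would use surjectivity of $\alpha_L$ to write $z=\alpha_L(z')$ and then apply \eqref{def} to $[\alpha_L(z'),[h,k]]=[[z',h],\alpha_L(k)]-[[z',k],\alpha_L(h)]$, observing $[z',h]\in H$, $[z',k]\in K$, and $\alpha_L(h)\in H$, $\alpha_L(k)\in K$, so both terms are in the span defining $[H,K]$. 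For (f) the same argument works with $z\in\alpha_L(L)$, which by hypothesis together with $H,K\subseteq\alpha_L(L)$ supplies all the preimages needed; one also checks $\alpha_L$ maps $[H,K]$ into itself using multiplicativity, $\alpha_L[h,k]=[\alpha_L(h),\alpha_L(k)]\in[H,K]$ since $\alpha_L(h)\in H$, $\alpha_L(k)\in K$.

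Finally I would remark that invariance under $\alpha_L$ is automatic in every item from multiplicativity of $\alpha_L$ and the ideal property ($\alpha_L(H)\subseteq H$, $\alpha_L(K)\subseteq K$), so the only real content is the bracket-closure, and the pattern throughout is: the ``outer $\alpha_L$'' version of \eqref{def} handles one order of multiplication for free, and the other order needs an $\alpha_L$-preimage, available exactly when we are allowed to assume surjectivity or to work inside $\alpha_L(L)$. Since the lemma is flagged as ``readily checked,'' I would present these verifications compactly, spelling out only the use of \eqref{def} in (c), (d), (f) and leaving the set-theoretic closure checks in (a), (b), (e) to the reader. \rdg
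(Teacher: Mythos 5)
Your verifications of a), b), c), e) and f) are essentially sound (the paper itself offers no written proof, declaring the lemma ``readily checked''), apart from a harmless slip in b): $[h,k]$ lies in $H$ because $H$ is an ideal and $k\in L$, and lies in $K$ because $K$ is an ideal and $h\in L$ --- you have the two attributions reversed. The one genuine problem is d). You propose to prove d) by showing $[[H,K],L]\subseteq[H,K]$ and $[L,[H,K]]\subseteq[H,K]$, but that is the conclusion of c), not of d); part d) only asserts that $[H,K]$ is a two-sided Hom-ideal of $H$ and of $K$. More importantly, the computation you offer, $[[h,k],\alpha_L(z)]=[\alpha_L(h),[k,z]]+[[h,z],\alpha_L(k)]$, only treats brackets whose external argument lies in $\Image(\alpha_L)$, and you yourself observe in the next sentence that the method requires an $\alpha_L$-preimage of that argument. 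Since an arbitrary $h'\in H$ need not lie in $\alpha_L(L)$, your argument does not establish $[[h,k],h']\in[H,K]$, and the claim that this ``makes d) work unconditionally'' is left unjustified: as written, your method would prove d) only under the surjectivity hypothesis of c).

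The missing idea is that d) needs no identity at all. By b), every generator $w=[h,k]$ or $[k,h]$ of $[H,K]$ lies in $H\cap K$; hence for $h'\in H$ the brackets $[w,h']$ and $[h',w]$ are brackets of an element of $K$ with an element of $H$, i.e.\ they are themselves generators of $[H,K]$, and symmetrically for elements of $K$. Together with $\alpha_L[h,k]=[\alpha_L(h),\alpha_L(k)]\in[H,K]$ this gives d) with no appeal to the Hom-Leibniz identity, and explains cleanly why no surjectivity is needed there. A smaller inaccuracy in the same vein: in c) neither order of bracketing is ``free'' --- both $[z,[h,k]]$ and $[[h,k],z]$ require writing $z=\alpha_L(z')$ before the identity (\ref{def}) applies, which is precisely why surjectivity is assumed; this does not damage your proof of c) or f), since there the preimage is always available, but it does undercut the ``one side is easy, the other needs a preimage'' pattern announced in your closing paragraph.
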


\begin{definition}
Let $(L,\alpha_L)$ be a Hom-Leibniz algebra. The subspace $Z(L) = \{ x \in L \mid [x, y] =0 = [y,x], \text{for\ all}\ y \in L \}$ of $L$ is said to be the center of $(L,\alpha_L)$.
\end{definition}

Note that if $\alpha_L : L \to L$ is a surjective homomorphism, then  $(Z(L),\alpha_{L \mid})$ is a two-sided Hom-ideal of $L$.

\begin{corollary}\label{corollary}
Any Hom-Leibniz algebra  $(L,\alpha_L)$ gives rise to a Hom-Lie algebra $(L_{\sf{Lie}}, \overline\alpha_L)$, which is obtained as the quotient of $L$ by the relation $[x,x]=0$, $x\in L$. Here $\overline\alpha_L$ is induced by $\alpha_L$.
This defines a functor
$
 (-)_{\sf{Lie}}:\sf{HomLb}\longrightarrow {\sf{HomLie}}.
$
Moreover, the canonical epimorphism   $(L,\alpha_L) \twoheadrightarrow (L_{\sf{Lie}}, \overline\alpha_L)$ is universal among all homomorphisms from { $(L,\alpha_L)$} to a Hom-Lie algebra, implying that the functor $(-)_{\sf{Lie}}$ is left adjoint to the inclusion functor $ {\sf{inc}:\sf{HomLie}}\longrightarrow {\sf{HomLb}} $.
This adjunction is a natural extension of the well-known adjunction
$
\xymatrix  {
\sf{Lie} \ar@<-1.1ex>[r]^{\perp}_{\sf{inc}}  & \sf{Lb}  \ar@<-1.1ex>[l]_{(-)_{\sf{Lie}}}
}
$
between the categories of Lie and Leibniz algebras,  in the sense that the following inner and outer diagrams
\[
\xymatrix @=20mm {
	\sf{Lie} \ar@<-1.1ex>[r]^{\perp}_{\sf{inc}} \ar[d]_{I'_i} & \sf{Lb}\ar@<-1.1ex>[l]_{(-)_{\sf{Lie}}} \ar[d]^{I_i} \\
	\sf{HomLie} \ar@<1.1ex>[r]_{\top}^{\sf{inc}}  & \sf{HomLb}  \ar@<1.1ex>[l]^{(-)_{\sf{Lie}}}
}
\]
are commutative for $i=0,1$.
\end{corollary}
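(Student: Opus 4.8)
The plan is to realize $(-)_{\sf{Lie}}$ as the functor that quotients out the squares and then to read off the adjunction and the compatibility diagrams from the resulting universal property. For a Hom-Leibniz algebra $(L,\alpha_L)$, let $I(L)$ be the two-sided ideal of $L$ generated by $\{[x,x]\mid x\in L\}$ (equivalently, by $\{[x,y]+[y,x]\mid x,y\in L\}$) and set $L_{\sf{Lie}}:=L/I(L)$. One first checks that $I(L)$ is automatically stable under $\alpha_L$: since $\alpha_L$ is multiplicative, $\alpha_L[x,x]=[\alpha_L(x),\alpha_L(x)]$ is again a square and $\alpha_L[a,m]=[\alpha_L(a),\alpha_L(m)]$, $\alpha_L[m,a]=[\alpha_L(m),\alpha_L(a)]$, so an induction on the way $I(L)$ is built from the squares by bracketing with elements of $L$ gives $\alpha_L\big(I(L)\big)\subseteq I(L)$. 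Hence $(I(L),\alpha_{L\mid})$ is a two-sided Hom-ideal and, by the quotient Hom-Leibniz algebra construction, $(L_{\sf{Lie}},\overline\alpha_L)$ is a Hom-Leibniz algebra with $\overline\alpha_L$ induced by $\alpha_L$. Its bracket is skew-symmetric, since $[\overline x,\overline x]=\overline{[x,x]}=0$ forces $[\overline x,\overline y]+[\overline y,\overline x]=\overline{[x+y,x+y]}=0$; and substituting skew-symmetry into the Hom-Leibniz identity \eqref{def} turns it into the Hom-Jacobi identity. So $(L_{\sf{Lie}},\overline\alpha_L)$ is a Hom-Lie algebra and the projection $\pi_L\colon(L,\alpha_L)\twoheadrightarrow(L_{\sf{Lie}},\overline\alpha_L)$ is a morphism in ${\sf{HomLb}}$.

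For the universal property, take a Hom-Lie algebra $(M,\alpha_M)$ and a morphism $f\colon(L,\alpha_L)\to(M,\alpha_M)$ in ${\sf{HomLb}}$. Skew-symmetry of $M$ gives $f([x,x])=[f(x),f(x)]=0$, so all squares lie in $\Ker f$; as $\Ker f$ is a two-sided Hom-ideal, $I(L)\subseteq\Ker f$ and $f$ factors as $f=\overline f\circ\pi_L$ for a unique linear $\overline f\colon L_{\sf{Lie}}\to M$, which (since $\pi_L$ is surjective and $f,\pi_L$ preserve brackets and commute with the structure maps) is automatically a morphism in ${\sf{HomLie}}$. This is precisely a bijection $\Hom_{\sf{HomLie}}(L_{\sf{Lie}},M)\cong\Hom_{\sf{HomLb}}(L,{\sf{inc}}(M))$, natural in $M$; and since any $g\colon(L,\alpha_L)\to(L',\alpha_{L'})$ in ${\sf{HomLb}}$ sends squares to squares, hence $I(L)$ into $I(L')$, it induces $g_{\sf{Lie}}\colon L_{\sf{Lie}}\to L'_{\sf{Lie}}$ with $\pi_{L'}\circ g=g_{\sf{Lie}}\circ\pi_L$, making $(-)_{\sf{Lie}}\colon{\sf{HomLb}}\to{\sf{HomLie}}$ a functor (identities and composites are clearly preserved) which is left adjoint to ${\sf{inc}}$ with unit $\pi$.

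It remains to check that the inner and outer squares commute for $i=0,1$. The outer square is immediate on objects and morphisms: both ${\sf{inc}}\circ I'_i$ and $I_i\circ{\sf{inc}}$ send a Lie algebra $K$ to $(K,0)$ for $i=0$ and to $(K,\id)$ for $i=1$, viewed as a Hom-Leibniz algebra, and act identically on underlying linear maps. For the inner square, let $K$ be a Leibniz algebra; in $I_i(K)=(K,\alpha)$ with $\alpha\in\{0,\id\}$ the Hom-bracket is just the bracket of $K$, so $I(K)$ is the ordinary ideal of $K$ generated by the squares, which for a Leibniz algebra is the linear span of the squares, i.e. the Leibniz kernel ${\sf{Leib}}(K)$ (already a two-sided ideal, by the Leibniz identity). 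Since $K/{\sf{Leib}}(K)$ is the classical Lie algebra attached to $K$ and $\overline\alpha$ equals $0$ for $\alpha=0$ and $\id$ for $\alpha=\id$, we get $\big(I_i(K)\big)_{\sf{Lie}}=\big(K/{\sf{Leib}}(K),\overline\alpha\big)=I'_i(K_{\sf{Lie}})$; the same identification holds on morphisms and carries $\pi$ to the classical quotient map, so the two adjunctions agree.

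The step I expect to be the real obstacle is the very first one: one must resist defining $L_{\sf{Lie}}$ as the quotient by the plain linear span of the squares — this need not be a two-sided Hom-ideal when $\alpha_L$ is not surjective — and instead use the generated ideal, verifying both its $\alpha_L$-stability and that the quotient bracket becomes skew-symmetric (hence, via Hom-Jacobi, genuinely Hom-Lie). Everything after that is formal.
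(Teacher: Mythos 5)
Your proof is correct and follows the route the paper clearly intends (the paper states this corollary without proof, treating it as readily checked): quotient by the squares, observe skew-symmetry turns the Hom-Leibniz identity into the Hom-Jacobi identity, derive the adjunction from the factorization property, and check the two squares on objects and morphisms. Your one substantive addition -- that for non-surjective $\alpha_L$ the linear span of the squares need not be a two-sided Hom-ideal (since $[\alpha_L(z),[x,x]]=0$ only covers brackets with elements of $\Image(\alpha_L)$), so one must pass to the generated Hom-ideal -- is a genuine and correctly handled subtlety that the paper's phrasing ``quotient by the relation $[x,x]=0$'' silently absorbs.
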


\subsection{Hom-Leibniz actions and semi-direct product}

\begin{definition} \label{Hom accion} Let $\left(  L ,\alpha_{L}\right)$ and $\left(  M,\alpha_{M}\right)$ be Hom-Leibniz algebras. A Hom-Leibniz action of $\left(  L, \alpha_{L}\right)$ on $\left(  M, \alpha_{M}\right)$ is a couple of linear maps,
$L\otimes M\to M$, $ x\otimes m \mapsto {^x}  m$ and $M\otimes L\to M$, $ m\otimes x  \mapsto m^  x$, satisfying the following identities:
\begin{enumerate}
\item[a)] $\alpha_{M}\left(  m\right)  ^{  \left[  x,y\right]}  =\left(
m ^  x\right)  ^ {\alpha_{L}\left(  y\right)}  -\left(  m ^  y\right)  ^{ \alpha_{L}\left(  x\right)},$

\item[b)] $  {^{\left[  x,y\right]}}  \alpha
_{M}\left(  m\right)= \left({^x}  m\right)  ^  {\alpha_{L}\left(  y\right)}  -  {^{\alpha_{L}\left(  x\right)}}   \left(  m ^ y\right),$

\item[c)] ${^{\alpha_{L}\left(  x\right)}}  \left(  {^y} m\right)  =   -{^{\alpha_{L}\left(  x\right)}}   \left(  m ^ y\right),$

\item[d)] ${^{\alpha_{L}\left(  x\right)}}  \left[  m,m^{\prime}\right]  = \left[
 {^x}m,\alpha_{M}\left(  m^{\prime}\right)  \right]  -\left[
 {^x}m^{\prime},\alpha_{M}\left(  m\right)  \right],$

\item[e)] $ \left[  m,{m^{\prime}}\right]  {^{\alpha_{L}\left(  x\right)}}   =  \left [ m ^ x,\alpha_{M}\left(  m^{\prime}\right)  \right] + \left[  \alpha_{M}\left(  m\right)  ,  {m^{\prime}} ^{\ x} \right ],$

\item[f)] $\left[  \alpha_{M}\left(  m\right)  ,  {^x}  {m^{\prime}}
\right]  = - \left[  \alpha_{M}\left(  m\right)  ,  {m^{\prime}} ^{\ x} \right ],$

\item[g)] $\alpha_{M}\left(  {^x}m\right)  = {^{\alpha_{L}\left(  x\right)}} \alpha_{M}\left(  m\right),$

\item[h)] $\alpha_{M}\left(  m^x\right)  =\alpha_{M}\left(  m\right) ^{ \alpha_{L}\left(  x\right)},$
\end{enumerate}
for all $x,y\in L$ and $m,m^{\prime}\in M$.

The  action is called trivial if ${^x}m=0=m^x$, for all $x\in L$, $m\in M$.
\end{definition}

\begin{example}\label{Hom accion Leib} \
\begin{enumerate}
\item[a)] A Hom-representation of a Hom-Leibniz algebra $(L,\alpha_L)$ is  a Hom-vector space $(M, \alpha_M)$ equipped with two linear operations, $L\otimes M\to M$, $ x\otimes m \mapsto {^x}  m$ and $M\otimes L\to M$, $ m\otimes x  \mapsto m^  x$, satisfying the axioms a), b), c), g), h) of Definition \ref{Hom accion}. Therefore, representations over a Hom-Leibniz algebra $(L,\alpha_L)$ are abelian Leibniz algebras enriched with Hom-Leibniz actions of $(L,\alpha_L)$.

\item[b)] Any action of a Leibniz algebra $L$ on another Leibniz algebra $M$ (see e.g.  \cite{LP}) gives a Hom-Leibniz action of $(L,\id_L)$ on $(M,\id_M)$.

\item[c)]  Let $\left( K, \alpha_{K}\right)$ be a Hom-Leibniz subalgebra of a  Hom-Leibniz algebra $\left(  L, \alpha_{L}\right)$
and $\left(  H, \alpha_{H}\right)$ be a two-sided Hom-ideal of $\left(  L, \alpha_{L}\right)$. There exists a Hom-Leibniz action of $\left(  K,\alpha_{K}\right)$ on $\left(  H, \alpha_{H}\right)$ given by the bracket in $\left(  L,\alpha_{L}\right).$

 \item[d)] Let $0 \to (M,\alpha_M) \stackrel{i}\to (K,\alpha_K) \stackrel{\pi}\to (L,\alpha_L) \to 0$ be a split short exact sequence of Hom-Leibniz  algebras, i. e.  there exists a homomorphism of Hom-Leibniz algebras $s:(L,\alpha_L)\to (K,\alpha_K)$ such that $\pi \circ s=\id_L$. Then there is a Hom-Leibniz action of $(L,\alpha_L)$ on $(M,\alpha_M)$ defined in the standard way:  ${}^{x}m=i^{-1}[s(x),i(m)]$ and $m^x=i^{-1}[i(m),s(x)]$ for all $x\in L$, $m\in M$.

\end{enumerate}
\end{example}

\begin{definition} \label{producto semidirecto}
Let $\left(  M,\alpha_{M}\right)$ and  $\left(  L,\alpha_{L}\right)$ be Hom-Leibniz algebras together with a Hom-Leibniz action of $\left(  L,\alpha_{L}\right)$ on $\left(  M,\alpha_{M}\right)$.   Their semi-direct product $\left(  M\rtimes L,{\alpha_{\rtimes}}\right)$ is the Hom-Leibniz algebra with  the underlying vector space $M\oplus L$,  endomorphism ${\alpha_{\rtimes}}:M\rtimes L\to M\rtimes L$ given by ${\alpha_{\rtimes}} \left(  m,l\right) = \left(  \alpha_{M}\left(m\right)  ,\alpha_{L}\left(  l\right)  \right)$ and bracket
$$\left[  \left(  m_{1},l_{1}\right)  ,\left(  m_{2},l_{2}\right)
\right]  =\left( [m_1,m_2] +  {^{\alpha_{L}\left(  l_{1}\right)}}  m_{2}+m_{1}^{
\alpha_{L}\left(  l_{2}\right)}  ,\left[  l_{1},l_{2}\right]  \right).$$
\end{definition}

Let $\left(  M,\alpha_{M}\right)$ and  $\left(  L,\alpha_{L}\right)$ be Hom-Leibniz algebras with a Hom-Leibniz action of $\left(  L,\alpha_{L}\right)$ on $\left(  M,\alpha_{M}\right)$. Then we have the following short exact sequence of Hom-Leibniz algebras
\begin{equation} \label{extension semidirectoLeib}
0\to \left(  M,\alpha_{M}\right)
\overset{i}{\to}\left(  M\rtimes L,{\alpha_{\rtimes}}\right)
\overset{\pi}{\to}\left(  L,\alpha_{L}\right)  \to 0\ ,
\end{equation}
where $i: M \to  M\rtimes L$, $i(m) =  \left(  m,0\right)$, and $\pi:  M\rtimes L  \to  L$,
$\pi \left(  m,x\right)   = x$. Moreover, this sequence splits by the Hom-Leibniz homomorphism $L  \to M\rtimes L$,
$ x\mapsto  \left(  0,x\right)$.

\subsection{Homology of Hom-Leibniz algebras}
In this subsection we recall from \cite{CIP1} the construction of homology vector spaces of a Hom-Leibniz algebra with coefficients in a Hom-co-representation. 

\begin{definition}
 A Hom-co-representation of a Hom-Leibniz algebra $(L,\alpha_L)$ is a Hom-vector space $(M,\alpha_M)$ together with two linear maps $ L \otimes M \to M$, $x\otimes m \mapsto {^x} m$ and $M \otimes L \to M$, $m \otimes x = m ^ x$, satisfying the following identities:
\begin{enumerate}
\item[a)] ${^{[x,y]}} \alpha_M(m) = {^{\alpha_L(x)}}({^y} m) - {^{\alpha_L(y)}} ({^x} m)$,

\item[b)] $\alpha_M(m) ^{ [x,y]}  = ({^y} m) ^{ \alpha_L(x)} - {^{\alpha_L(y)}} (m ^x)$,

\item[c)] $(m ^ x) ^{ \alpha_L(y)} = - {^{\alpha_L(y)}} (m ^x)$,

\item[d)] $\alpha_M({^x} m) = {^{\alpha_L(x)}} \alpha_M(m)$,

\item[e)] $\alpha_M(m ^ x) =  \alpha_M(m) ^{\alpha_L(x)}$,
\end{enumerate}
for any $x, y \in L$ and $m \in M$
\end{definition}

\begin{example}\ \label{ejemplo}
\begin{enumerate}
\item[a)] { Let} $M$ be a  co-representation of a Leibniz algebra L \cite{LP}, then $(M,\id_M)$ is a  Hom-co-representation of the  Hom-Leibniz algebra $(L,\id_L)$.

\item[b)] The underlying Hom-vector  space of a  Hom-Leibniz algebra  $(L,\alpha_L)$ has a Hom-co-representation structure  given by
 ${^x} y =-[y, x]$ and  $y ^ x = [y, x]$, $x,y\in L$.
\end{enumerate}
\end{example}

 Let $(L,\alpha_L)$ be a Hom-Leibniz algebra and $(M,\alpha_M)$ be a Hom-co-representation of $(L,\alpha_L)$. The homology $HL_{\star}^{\alpha}(L,M)$ of $(L,\alpha_L)$ with coefficients in $(M,\alpha_M)$ is defined to be the homology of the chain complex $(CL_{\star}^{\alpha}(L,M),d_{\star})$, where
  \[
  CL_n^{\alpha}(L,M) :=M \otimes L^{\otimes n}, \quad n\geq 0
  \]
  and $d_n: CL_n^{\alpha}(L,M) \to CL_{n-1}^{\alpha}(L,M)$, $n\geq 1 $, is the linear map given by
    \begin{align*}
    d_n&(m \otimes x_1 \otimes \dots \otimes x_n)= m^{x_1} \otimes \alpha_L(x_2) \otimes \dots \otimes \alpha_L(x_n) \\
    & +   \sum_{i=2}^n (-1)^i \ \ {^{x_i}}m \otimes \alpha_L(x_1) \otimes \dots \otimes \widehat{\alpha_L(x_i)} \otimes \dots \otimes \alpha_L(x_n) \\
    &+ \sum _{1 \leq i < j \leq n} (-1)^{j+1} \alpha_M(m) \otimes \alpha_L(x_1) \otimes \dots \otimes \alpha_L(x_{i-1}) \otimes [x_i,x_j] \otimes \alpha_L(x_{i+1}) \\
    & \qquad \qquad \qquad  \qquad \qquad \qquad \qquad \qquad \qquad \otimes \dots \otimes  \widehat{\alpha_L(x_j)} \otimes \dots \otimes \alpha_L(x_n),    \end{align*}
where the notation $\widehat{\alpha_L(x_i)}$ indicates that the variable $\alpha_L(x_i)$ is omitted.
Hence
\[
HL_{n}^{\alpha}(L,M) :=H_{n}(CL_{\star}^{\alpha}(L,M), d_{\star}), \quad n\geq 0.
\]

Direct calculations show that $HL^{\alpha}_0(L,M) = {M}/{M^L}$, where $M^L=\{m^x \mid m \in M, x \in L\}$, and if $(M,\alpha_M )$ is a trivial Hom-co-representation of $(L,\alpha_L )$, that is $m ^ x = {^x} m = 0$, then $HL^{\alpha}_1(L,M) = \left({M \otimes L}\right) /\left({\alpha_M(M) \otimes [L,L]}\right)$. In particular, if $M=\K$ then $HL^{\alpha}_1(L,\K)=L/[L,L]$. Later on we write  $HL^{\alpha}_n(L)$ for $HL^{\alpha}_n(L,\K)$ { and it is called homology with trivial coefficients}.


\section{Non-abelian  Hom-Leibniz tensor product}\label{section3}

Let $(M,\alpha_M)$ and $(N,\alpha_N)$ be Hom-Leibniz algebras acting on each other. We denote by $M\ast N$  the vector space spanned by all symbols $m \ast n$, $n \ast m$ and subject to the following relations:
\begin{align}\label{a)}
\begin{array}{crcl}
&\lambda (m \ast n) &= &(\lambda m) \ast n= m \ast ( \lambda n), \\
&\lambda (n \ast m) &=&b (\lambda n) \ast m= n \ast ( \lambda m), \\
&(m+m') \ast n &= &m \ast n + m' \ast n, \\
&m \ast (n + n') &= &m \ast n + m \ast n', \\
&(n+n') \ast m &= &n \ast m + n' \ast m, \\
&n \ast (m + m') &= &n \ast m + n \ast m', \\
&\alpha_{M}(  m)  \ast\left[  n,n^{\prime}\right]&= &m ^n  \ast\alpha_{N}(  n^{\prime})  -m^{n^{\prime}}  \ast\alpha_{N}(  n), \\
&\alpha_{N}(  n)  \ast\left[  m,m^{\prime}\right]  &=&n^m  \ast\alpha_{M}(  m^{\prime})  - n^{m^{\prime}}  \ast\alpha_{M}(  m),  \\
&\left[  m,m^{\prime}\right] \ast \alpha_{N}( n)&=& {}^mn \ast\alpha_{M}(  m^{\prime})  - \alpha_{M}(  m) \ast n^{m^{\prime}},  \\
&\left[  n,n^{\prime}\right] \ast \alpha_{M}( m)&=& {}^nm \ast\alpha_{N}(  n^{\prime})  - \alpha_{N}(  n) \ast m^{n^{\prime}}, \\
&\alpha_{M}(  m)  \ast  {^{m^{\prime}}n}  & = &- \alpha_{M} ( m )  \ast  n ^{m'} , \\
&\alpha_{N}(  n)  \ast  {^{n^{\prime}}m} & =& - \alpha_{N}(  n )  \ast  m ^{n'} ,  \\
&m^n\ast{{}^{m^{\prime}}}{n^{\prime}}& =& {}^mn\ast{m^{\prime}}^{n^{\prime}}, \\
&m^n \ast {n^{\prime}}^{m^{\prime}} & = & {}^mn \ast {}^{n^{\prime}}{m^{\prime}},  \\
&^n m \ast {}^{m^{\prime}}{n^{\prime}} & =& n^m \ast {m^{\prime}}^{n^{\prime}},  \\
&^n m  \ast {n^{\prime}}^{m^{\prime}} &= &n^m  \ast {}^{n^{\prime}} {m^{\prime}},
\end{array}
\end{align}
for all $\lambda \in \K$, $m, m' \in M$, $n, n' \in N$.

We claim that $(M \ast N, \alpha_{M\ast N})$ is a Hom-vector space, where $\alpha_{M\ast N}$ is the linear map induced by $\alpha_M$ and $\alpha_N$, i.e.
\begin{align*}
&\alpha_{M\ast N}\left(  m\ast n\right)  =\alpha_{M}\left(m\right)  \ast\alpha_{N}\left(  n\right), \quad
\alpha_{M\ast N}\left(  n\ast m\right)  =\alpha_{N}\left(n\right)  \ast\alpha_{M}\left(  m\right).
\end{align*}
Indeed, it can be checked readily that  $\alpha_{\ast}$ preserves all the relations in (\ref{a)}).

To be able to introduce the non-abelian Hom-Leibniz tensor product, we need to assume that the actions are compatibly in the following sense.

\begin{definition}\label{compatibility1}
Let $(M,\alpha_M)$ and $(N,\alpha_N)$ be Hom-Leibniz algebras with Hom-Leibniz actions on each other. The actions are  said to be compatible if
\begin{align}\label{compatibility}
\begin{array}{crclcrcl}
&^{(^mn)} m'&=&[m^n,m'],  && ^{(^nm)}n'&=&b[n^m,n'],\\
&^{(n^m)} m'&=&[^nm,m'],  && ^{(m^n)}n'&=& [^mn,n'],\\
& m^{(^{m^{\prime}}n)}&=&[m,m'{\ ^n}],  && n^{(^{n^{\prime}}m)}&=& [n,n'{\ ^m}],\\
& m^{(n^{m^{\prime}})}&=&[m,^nm'],  && n^{(m^{n^{\prime}})}&=& [n,^mn'],
\end{array}
\end{align}
for all $m,m'\in M$ and $n,n'\in N$.
\end{definition}

\begin{example}\label{compatible}
If  $(H,\alpha_H)$ and $(H',\alpha_{H'})$ both are Hom-ideals of a Hom-Leibniz algebra $(L,\alpha_L)$, then the Hom-Leibniz actions of $(H,\alpha_H)$ and $(H',\alpha_{H'})$ on each other, considered in Example \ref{Hom accion Leib} {c)}, are compatible.
\end{example}

Now we have the following  property:

\begin{proposition}
Let $(M,\alpha_M)$ and $(N,\alpha_N)$ be Hom-Leibniz algebras acting compatibly on each other, then the Hom-vector space
 $  (M\ast N, {\alpha}_{M\ast N} )$ endowed  with the following bracket operation
\begin{align} \label{bracket}
\begin{array} {crcl}
& \left[  m\ast n,m^{\prime}\ast n^{\prime}\right]  &=& m^n\ast^{m^{\prime}}{n^{\prime}},\\
&\left[  m\ast n,n^{\prime}\ast m^{\prime}\right]  &=& m^n \ast {n^{\prime}}^{m^{\prime}},\\
&\left[  n\ast m,m^{\prime}\ast n^{\prime}\right]  &=& {^n} m \ast ^{m^{\prime}}{n^{\prime}},\\
&\left[  n\ast m,n^{\prime}\ast m^{\prime}\right]  &=& {^n} m  \ast {n^{\prime}}^{m^{\prime}},
\end{array}
\end{align}
is a Hom-Leibniz algebra.
\end{proposition}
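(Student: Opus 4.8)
The plan is to factor the bracket (\ref{bracket}) through two auxiliary maps, so that the three things to be verified --- that (\ref{bracket}) is well defined, that $\alpha_{M\ast N}$ is multiplicative for it, and that the Hom-Leibniz identity (\ref{def}) holds --- all reduce to identities already available in $M$ and $N$. Concretely, I would first define linear maps $\lambda\colon M\ast N\to M$ and $\rho\colon M\ast N\to N$ on the generating symbols by
$$\lambda(m\ast n)=m^{n},\qquad \lambda(n\ast m)={}^{n}m,\qquad \rho(m\ast n)={}^{m}n,\qquad \rho(n\ast m)=n^{m}\qquad (m\in M,\ n\in N),$$
and observe that then the four cases of (\ref{bracket}) are subsumed in the single formula $[g,h]=\lambda(g)\ast\rho(h)$ for generators $g,h$ of $M\ast N$.

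The core step is to check that $\lambda$ and $\rho$ are well defined, i.e. that they annihilate every defining relation of $M\ast N$ in (\ref{a)}). The six $\K$-(bi)linearity relations are immediate. For each of the remaining ten relations I would apply $\lambda$ (resp. $\rho$) to both sides; after a few rewritings --- using the compatibility conditions (\ref{compatibility}), and occasionally the skew-symmetry axiom f) of Definition \ref{Hom accion} --- the resulting equality becomes one of the mutual-action axioms a)--f). For instance, applying $\lambda$ to $\alpha_{M}(m)\ast[n,n']=m^{n}\ast\alpha_{N}(n')-m^{n'}\ast\alpha_{N}(n)$ yields exactly axiom a), whereas applying $\rho$ gives ${}^{\alpha_{M}(m)}[n,n']={}^{m^{n}}\alpha_{N}(n')-{}^{m^{n'}}\alpha_{N}(n)$, which is axiom d) once the right-hand terms are rewritten by the compatibility identity ${}^{m^{n}}n'=[{}^{m}n,n']$; the exchange-type relations $m^{n}\ast{}^{m'}n'={}^{m}n\ast(m')^{n'}$ and their analogues reduce to the assertion that two compatibility rewritings of one and the same bracket coincide; and $\alpha_{M}(m)\ast{}^{m'}n=-\alpha_{M}(m)\ast n^{m'}$ (and its mirror) reduces to axioms c) and f). I would record this correspondence relation-by-relation; the only delicate point is choosing the right compatibility identity in each line.

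Granting this, the rest is essentially formal. The operation $[g,h]:=\lambda(g)\ast\rho(h)$ is then a well-defined bilinear map on $M\ast N$ --- it is the composite of $\lambda\times\rho$ with the canonical bilinear map $M\times N\to M\ast N$, $(a,b)\mapsto a\ast b$ --- and on generators it reproduces (\ref{bracket}). Axioms g) and h) of Definition \ref{Hom accion} give $\lambda\circ\alpha_{M\ast N}=\alpha_{M}\circ\lambda$ and $\rho\circ\alpha_{M\ast N}=\alpha_{N}\circ\rho$, whence $\alpha_{M\ast N}([g,h])=\alpha_{M}(\lambda g)\ast\alpha_{N}(\rho h)=[\alpha_{M\ast N}g,\alpha_{M\ast N}h]$, so $\alpha_{M\ast N}$ is multiplicative. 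For (\ref{def}) it suffices to take $X,Y,Z$ among the generators. Since $\lambda Y\in M$ and $\rho Z\in N$, applying $\rho$ to $[Y,Z]=\lambda Y\ast\rho Z$ gives $\rho([Y,Z])={}^{\lambda Y}(\rho Z)$, which equals $[\rho Y,\rho Z]$ by the compatibility identities ${}^{m^{n}}n'=[{}^{m}n,n']$, ${}^{{}^{n}m}n'=[n^{m},n']$; together with $\lambda(\alpha_{M\ast N}X)=\alpha_{M}(\lambda X)$ this yields $[\alpha_{M\ast N}X,[Y,Z]]=\alpha_{M}(\lambda X)\ast[\rho Y,\rho Z]$, and likewise $[[X,Y],\alpha_{M\ast N}Z]=(\lambda X)^{\rho Y}\ast\alpha_{N}(\rho Z)$ and $[[X,Z],\alpha_{M\ast N}Y]=(\lambda X)^{\rho Z}\ast\alpha_{N}(\rho Y)$. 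Hence (\ref{def}) for $M\ast N$ becomes
$$\alpha_{M}(\lambda X)\ast[\rho Y,\rho Z]=(\lambda X)^{\rho Y}\ast\alpha_{N}(\rho Z)-(\lambda X)^{\rho Z}\ast\alpha_{N}(\rho Y),$$
which is precisely the relation $\alpha_{M}(m)\ast[n,n']=m^{n}\ast\alpha_{N}(n')-m^{n'}\ast\alpha_{N}(n)$ of (\ref{a)}) with $m=\lambda X$, $n=\rho Y$, $n'=\rho Z$, completing the argument.

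In short, the main obstacle is the case-by-case verification that $\lambda$ and $\rho$ respect the relations (\ref{a)}): ten relations tested against two maps, several of which require precisely the right compatibility condition from (\ref{compatibility}) (and sometimes axiom f)) before the action axioms a)--f) apply. Once that is established, the multiplicativity of $\alpha_{M\ast N}$ and the Hom-Leibniz identity follow almost at once from the factorization $[g,h]=\lambda(g)\ast\rho(h)$, the latter collapsing onto a single relation already built into the presentation of $M\ast N$.
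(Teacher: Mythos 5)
Your argument is correct, and it is organized differently from the paper's. The paper disposes of the proposition with a terse direct verification: one checks by hand that the four formulas (\ref{bracket}) are compatible with each defining relation of (\ref{a)}), then verifies the Hom-Leibniz identity (\ref{def}) on generator triples using (\ref{compatibility}), then checks multiplicativity of $\alpha_{M\ast N}$. You instead factor the bracket as $[g,h]=\lambda(g)\ast\rho(h)$ through the two maps $\lambda,\rho$ --- which are exactly the homomorphisms $\psi_1,\psi_2$ that the paper only introduces later, in Proposition \ref{action-on-tensor} a) --- and push all the computational content into the single claim that $\lambda$ and $\rho$ annihilate the relations (\ref{a)}). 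I checked your representative cases and they are right: the seventh relation maps under $\lambda$ to axiom a) and under $\rho$ to axiom d) of Definition \ref{Hom accion} after one compatibility rewriting; the exchange relations become the coincidence of two compatibility rewritings of the same bracket (e.g.\ both sides of $m^n\ast{}^{m'}n'={}^mn\ast {m'}^{n'}$ map under $\lambda$ to $[m^n,{m'}^{n'}]$); and the ninth relation needs axiom f) as you say. Granting that, well-definedness and bilinearity of the bracket, the multiplicativity of $\alpha_{M\ast N}$ (from axioms g), h)), and the collapse of (\ref{def}) onto the relation $\alpha_M(m)\ast[n,n']=m^n\ast\alpha_N(n')-m^{n'}\ast\alpha_N(n)$ all follow formally, the last step being legitimate because both sides of (\ref{def}) are trilinear so generators suffice. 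What your route buys is economy and reuse: the twenty relation-checks you defer are work the paper must do anyway to establish Proposition \ref{action-on-tensor} a), and once done they replace the case-by-case verification of the Hom-Leibniz identity on all combinations of the two generator types by a single instance of a relation already built into the presentation of $M\ast N$. What it costs is only that the correspondence between relations and action axioms must be recorded carefully (you have sketched, not exhausted, the list), which is no worse than the ``routine calculations'' the paper leaves implicit.
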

\begin{proof}
Routine calculations show that the bracket given by (\ref{bracket}) is compatible with the defining relations in (\ref{a)}) of $M\ast  N$ and can be extended from generators to any elements. The verification of the Hom-Leibniz identity (\ref{def}) is straightforward by using compatibility conditions in (\ref{compatibility}). Finally, it follows directly by definition of $\alpha_{M\ast N}$ that it  preserves the bracket given by (\ref{bracket}).
\end{proof}

\begin{definition}
The above Hom-Leibniz  algebra structure on $(  M\ast  N,\alpha_{M \ast N})$ is called the non-abelian Hom-Leibniz tensor product of the Hom-Leibniz algebras $\left(M,\alpha_{M}\right)$ and $\left(  N,\alpha_{N}\right)$.
\end{definition}

\begin{remark}
If $\alpha_M=\id_M$ and $\alpha_N=\id_N$, then $M \ast N$ coincides with the non-abelian tensor product of Leibniz algebras introduced in \cite{Gn}.
\end{remark}

\begin{remark}\label{remarkLieLb} Let $(M,\alpha_M)$ and $(N,\alpha_N)$ be Hom-Lie algebras. One can readily check that the following assertions hold:
\begin{enumerate}
\item[a)] Any Hom-Lie action of $(M,\alpha_M)$ on $(N,\alpha_N)$, $M\otimes N\to N$, $m\otimes n\mapsto {}^mn$ (see \cite[Definition 1.7]{CKP} for the definition), gives a Hom-Leibniz action   of $(M,\alpha_M)$ on $(N,\alpha_N)$ by letting $n^{m}= -{}^mn$ for all $m\in M$ and $n\in N$.
\item[b)] For compatible Hom-Lie actions (see \cite[Definition 2.1]{CKP}) of $(M,\alpha_M)$ and $(N,\alpha_N)$ on each other, the induced Hom-Leibniz actions are also compatible.
\item[c)] If $(M,\alpha_M)$ and $(N,\alpha_N)$ act compatibly on each other, then there is an epimorphism of Hom-Leibniz algebras
$(M\ast N, \alpha_{M\ast N}) \twoheadrightarrow (M\star N, \alpha_{M\star N})$
defined on generators by $m\ast n\mapsto m\star n$ and $n\ast m\mapsto - m\star n$, where $\star$ denotes the non-abelian Hom-Lie tensor product (see \cite[Definition 2.4]{CKP}).
\end{enumerate}
\end{remark}

Sometimes the non-abelian Hom-Leibniz tensor product can be described as the tensor product of vector spaces. In particular, we have the following:

\begin{proposition} \label{tensor abel}
If the Hom-Leibniz algebras $(M,\alpha_M)$ and $(N, \alpha_N)$ act trivially on each other and both $\alpha_M$, $\alpha_N$ are epimorphisms, then there is an isomorphism of abelian Hom-Leibniz algebras
 \[
( M\ast N, \alpha_{M\ast N}) \cong
 \big((M^{ab}\otimes N^{ab}) \oplus (N^{ab}\otimes M^{ab}),\alpha _{\oplus}\big),
\]
where $M^{ab}=M/[M,M]$, $N^{ab}=N/[N,N]$ and $\alpha_{\oplus}$ denotes the linear self-map of $(M^{ab}\otimes N^{ab}) \oplus ( N^{ab}\otimes M^{ab})$ induced by $\alpha_M$ and $\alpha_N$.
\end{proposition}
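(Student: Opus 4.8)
The plan is to exhibit an explicit mutually inverse pair of linear maps between $M\ast N$ and $(M^{ab}\otimes N^{ab})\oplus(N^{ab}\otimes M^{ab})$ and then check that both respect the Hom-structure and the (trivial) bracket. First I would observe that when the actions are trivial, almost all the defining relations in (\ref{a)}) collapse: the relations involving ${}^m n$, $m^n$, ${}^n m$, $n^m$ become trivial identities $0=0$, so the only surviving content is $\K$-bilinearity in each of the two slots together with the two relations $\alpha_M(m)\ast[n,n']=0$ and $\alpha_N(n)\ast[m,m']=0$ (coming from the seventh and eighth relations, with the right-hand sides vanishing), and symmetrically $[m,m']\ast\alpha_N(n)=0$, $[n,n']\ast\alpha_M(m)=0$. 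Hence $M\ast N$ is spanned by symbols $m\ast n$ and $n\ast m$, bilinear over $\K$, killing $\alpha_M(m)\ast[n,n']$ and $[m,m']\ast\alpha_N(n)$ and their swapped versions.

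The key step is to use surjectivity of $\alpha_M$ and $\alpha_N$ to upgrade these relations. Since $\alpha_M$ is onto, every $m\in M$ is of the form $\alpha_M(m_0)$, so $m\ast[n,n']=0$ for all $m\in M$ and all brackets $[n,n']$; likewise $[m,m']\ast n=0$, and the two swapped relations. Therefore the bilinear map $M\times N\to M\ast N$, $(m,n)\mapsto m\ast n$, factors through $M^{ab}\otimes N^{ab}$, and $(n,m)\mapsto n\ast m$ factors through $N^{ab}\otimes M^{ab}$, giving a well-defined linear surjection $\Phi:(M^{ab}\otimes N^{ab})\oplus(N^{ab}\otimes M^{ab})\to M\ast N$ sending $\overline m\otimes\overline n\mapsto m\ast n$ and $\overline n\otimes\overline m\mapsto n\ast m$. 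Conversely, the assignments $m\ast n\mapsto(\overline m\otimes\overline n,0)$ and $n\ast m\mapsto(0,\overline n\otimes\overline m)$ respect every surviving relation in (\ref{a)}) (bilinearity is clear, and $\alpha_M(m)\ast[n,n']\mapsto \overline{\alpha_M(m)}\otimes\overline{[n,n']}=\overline{\alpha_M(m)}\otimes 0=0$, etc.), so they define a linear map $\Psi:M\ast N\to(M^{ab}\otimes N^{ab})\oplus(N^{ab}\otimes M^{ab})$. On generators $\Phi$ and $\Psi$ are visibly mutually inverse, hence both are isomorphisms of vector spaces.

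It remains to check compatibility with the Hom-maps and the brackets. By definition $\alpha_{M\ast N}(m\ast n)=\alpha_M(m)\ast\alpha_N(n)$, which $\Psi$ sends to $\overline{\alpha_M(m)}\otimes\overline{\alpha_N(n)}=\alpha_\oplus(\overline m\otimes\overline n)$, where $\alpha_\oplus$ is the self-map induced by $\alpha_M\otimes\alpha_N$ and $\alpha_N\otimes\alpha_M$ (well-defined on the abelianizations since $\alpha_M$, $\alpha_N$ preserve brackets); so $\Psi$ intertwines $\alpha_{M\ast N}$ and $\alpha_\oplus$. For the bracket: the formulas (\ref{bracket}) express $[m\ast n,m'\ast n']=m^n\ast{}^{m'}n'$ and its variants, all of which vanish because the actions are trivial; thus $M\ast N$ carries the zero bracket and is an abelian Hom-Leibniz algebra, matching the abelian structure on the target. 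Hence $\Psi$ is an isomorphism of abelian Hom-Leibniz algebras.

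I do not anticipate a serious obstacle here; the only point requiring a little care is verifying that the ``obvious'' surviving relations are exactly the ones listed above — i.e. that no hidden relation is produced by the interaction of bilinearity with the action-relations once the actions are set to zero — and that $\alpha_\oplus$ is genuinely well-defined on $M^{ab}\otimes N^{ab}$, which uses multiplicativity of $\alpha_M$ and $\alpha_N$. Surjectivity of the $\alpha$'s is used precisely once, to promote ``$\alpha_M(m)\ast[n,n']=0$'' to ``$m\ast[n,n']=0$'', and without it one would only get a quotient of $M\otimes N$ by $\alpha_M(M)\otimes[N,N]$ rather than $M^{ab}\otimes N^{ab}$.
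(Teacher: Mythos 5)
Your proposal is correct and follows essentially the same route as the paper: trivial actions make the bracket (\ref{bracket}) vanish and collapse the relations (\ref{a)}) to bilinearity plus $\alpha_M(m)\ast[n,n']=0$ and its three companions, and surjectivity of $\alpha_M$, $\alpha_N$ then identifies $M\ast N$ with the quotient of $(M\otimes N)\oplus(N\otimes M)$ by $m\ast[n,n']=[m,m']\ast n=[n,n']\ast m=n\ast[m,m']=0$, i.e. with $(M^{ab}\otimes N^{ab})\oplus(N^{ab}\otimes M^{ab})$, compatibly with the endomorphisms. You merely make explicit the mutually inverse maps that the paper leaves implicit.
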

\begin{proof}
 Since the actions are trivial, then relations (\ref{bracket})  enables us to see that $(M\ast N,$ $\alpha_{M\ast N})$ is an abelian Hom-Leibniz algebra.

Since $\alpha_{M}$ and $\alpha_{N}$ are epimorphisms, the defining relations (\ref{a)}) of the non-abelian tensor product say precisely that the vector space $M\ast N$ is the quotient of $(M\otimes N)\oplus$ $(N\otimes M)$ by the relations
\[
m\ast [ n,n' ]=[m,m']\ast n =[n,n']\ast m=n\ast [ m,m' ]=0
\]
 for all $m,m' \in M$, $n,n' \in N$. The later is isomorphic to $(M^{ab}\otimes N^{ab}) \oplus (N^{ab}\otimes M^{ab})$ and this
isomorphism commutes with the endomorphisms $\alpha_{\oplus}$ and $\alpha_{M\ast N}$.
\end{proof}

The non-abelian Hom-Leibniz tensor product is functorial in the following sense: if $f:(M,\alpha_M)\to (M',\alpha_{M'})$ and $g:(N,\alpha_N)\to (N',\alpha_{N'})$ are homomorphisms of Hom-Leibniz algebras together with compatible actions of $(M, \alpha_M)$ (resp. $(M', \alpha_{M'}))$  and $(N, \alpha_N)$ (resp. $(N', \alpha_{N'}))$ on each other such that $f$, $g$ preserve these actions, that is

\[
\begin{array}{rclcrcl}
f({}^nm)&=&{}^{g(n)}f(m), & & f(m^n)&=&f(m)^{g(n)}, \\
 g({}^mn)&=&{}^{f(m)}g(n), &  & g(n^m)&=&g(n)^{f(m)},
 \end{array}
\]
for all  $m\in M,\ n\in N$, then we have a homomorphism of Hom-Leibniz algebras
\[
f\ast g:(M\ast N,\alpha_{M\ast N})\to (M'\ast N',\alpha_{M'\ast N'})
\]
 defined by $(f\ast g)(m\ast n)=f(m)\ast g(n), (f\ast g)(n\ast m)=g(n)\ast f(m)$.

\begin{proposition}\label{exact-tensor-1}
Let $0\to (M_1,\alpha_{M_1})\overset{f}{\to}(M_2,\alpha_{M_2})\overset{g}{\to}(M_3,\alpha_{M_3})\to 0$ be a short exact sequence of Hom-Leibniz algebras. Let $(N,\alpha_{N})$ be a Hom-Leibniz algebra together with compatible Hom-Leibniz actions of $(N,\alpha_{N})$ and $(M_i,\alpha_{M_i})$ $(i=1,2,3)$ on each other and $f$, $g$ preserve these actions.
Then there is an exact sequence of Hom-Leibniz algebras
\[
(M_1\ast N,\alpha_{M_1\ast N})\stackrel{f\ast \id_N}{\longrightarrow}(M_2\ast N,\alpha_{M_2\ast N})\stackrel{g\ast \id_N}{\longrightarrow}(M_3\ast N,\alpha_{M_3\ast N})\longrightarrow 0.
\]
\end{proposition}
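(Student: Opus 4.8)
The plan is to verify right exactness by the standard two-step argument: first that $g \ast \id_N$ is surjective, and then that $\Ker(g\ast \id_N) = \Image(f \ast \id_N)$. Surjectivity is immediate: since $g$ is surjective, every generator $m_3 \ast n$ (resp.\ $n \ast m_3$) of $M_3 \ast N$ is the image of $m_2 \ast n$ (resp.\ $n \ast m_2$) for any $m_2$ with $g(m_2) = m_3$, and the generators span $M_3 \ast N$. For the inclusion $\Image(f\ast\id_N) \subseteq \Ker(g\ast\id_N)$, it suffices to note that $g \circ f = 0$, so $(g\ast\id_N)\circ(f\ast\id_N)$ kills every generator and hence is the zero map.

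The substance is the reverse inclusion $\Ker(g\ast\id_N)\subseteq \Image(f\ast\id_N)$. I would argue by constructing a well-defined homomorphism in the other direction on the quotient. Let $T = \Image(f\ast\id_N)$, which by functoriality is a subspace of $M_2 \ast N$; first I would check it is in fact a two-sided Hom-ideal, using the bracket formulas (\ref{bracket}) together with the fact that $f$ preserves actions and $\Image f$ is an ideal of $M_2$ — the brackets of a generator coming from $M_1$ with any generator of $M_2\ast N$ land back in $T$ because, e.g., $[m_1^n \ast {}^{m_2'}n', \ \cdot\,]$ type expressions only ever involve $f(m_1)$ through actions that $f$ intertwines. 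Then $g\ast\id_N$ factors through $(M_2\ast N)/T$, and the goal becomes showing the induced map $(M_2\ast N)/T \to M_3\ast N$ is injective, equivalently an isomorphism. For that I would build its inverse $\theta: M_3 \ast N \to (M_2\ast N)/T$ directly on generators by $m_3 \ast n \mapsto \overline{m_2 \ast n}$ and $n \ast m_3 \mapsto \overline{n \ast m_2}$, where $m_2$ is any preimage of $m_3$ under $g$. The well-definedness of $\theta$ is the crux: independence of the choice of preimage reduces to the claim that if $g(m_2) = 0$, i.e.\ $m_2 \in \Image f$, then $m_2 \ast n, n\ast m_2 \in T$ — which holds by definition of $T$ — and one must further check that $\theta$ respects each of the sixteen defining relations in (\ref{a)}). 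The latter is a relation-by-relation verification: each relation in $M_3 \ast N$ involves finitely many generators, each lifts to the corresponding relation in $M_2 \ast N$ modulo elements of $T$, using that $g$ is a Hom-Leibniz homomorphism preserving the $N$-actions, so that, for instance, the relation $\alpha_{M_3}(m_3)\ast[n,n'] = m_3^n\ast\alpha_N(n') - m_3^{n'}\ast\alpha_N(n)$ lifts because $g(\alpha_{M_2}(m_2)) = \alpha_{M_3}(m_3)$ and $g(m_2^n) = m_3^n$.

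Once $\theta$ is well defined, one checks $\theta$ is $\K$-linear and commutes with the endomorphisms $\alpha$ (immediate from the formulas), and that $\theta$ is inverse to the induced map $\overline{g\ast\id_N}$ on generators, hence on all of the spaces. This gives $(M_2\ast N)/T \cong M_3 \ast N$ compatibly with $g\ast\id_N$, which is exactly the statement $\Ker(g\ast\id_N) = T = \Image(f\ast\id_N)$, completing the proof.

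I expect the main obstacle to be the bookkeeping in showing $T = \Image(f\ast\id_N)$ is a Hom-ideal and that $\theta$ is compatible with all defining relations of the tensor product; there is no conceptual difficulty, but the sixteen relations in (\ref{a)}) and four bracket identities in (\ref{bracket}) each require the hypothesis that $f,g$ preserve actions to be invoked in the right spot, and one must be careful that the choice-of-preimage ambiguity is genuinely absorbed into $T$ for every relation, not just the linear ones. A cleaner alternative, if one prefers to avoid constructing $\theta$ by hand, is to present $M_3 \ast N$ by generators and relations and observe that it is the pushout/cokernel making the sequence right exact by a universal-property argument; I would mention this but carry out the explicit version above since it keeps the paper self-contained.
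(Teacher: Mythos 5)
Your proposal is correct and follows essentially the same route as the paper: both establish surjectivity of $g\ast \id_N$ and the inclusion $\Image(f\ast\id_N)\subseteq\Ker(g\ast\id_N)$ immediately, then verify that $\Image(f\ast\id_N)$ is a two-sided Hom-ideal via the bracket formulas and the action-preservation hypotheses, and finally exhibit the inverse of the induced map $(M_2\ast N)/\Image(f\ast\id_N)\to M_3\ast N$ on generators by choosing $g$-preimages. If anything, you are slightly more explicit than the paper about the need to check that this inverse respects all the defining relations of the tensor product, which the paper leaves implicit.
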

\begin{proof}
 Clearly $g\ast \id_{N}$ is an epimorphism and ${\Image}\left( f\ast
\id_{N}\right) \subseteq \Ker\left( g\ast \id_{N}\right)$.
Since ${\Image}\left( f\ast \id_{N}\right)$ is generated by all elements of the form $f(m_{1})\ast n$, $n \ast f(m_1)$, with $m_{1}\in M_{1}$, $n\in N$, it is a two-sided Hom-ideal in $(M_{2}\ast N,\alpha_{M_{2}\ast N})$ because of the following equalities:
\[
\begin{array}{lccccl}
\left[ f(m_{1})\ast n,m_{2}\ast n'\right] &=&f(m_{1})^{n}\ast
{^{m_{2}}}n'&=&f\left( m_{1}^{\text{ \ }n}\right) \ast {^{m_{2}}}n' &\in
{\Image}\left( f\ast \id_{N}\right),\\

\left[ f(m_{1})\ast n,n'\ast m_{2}\right] &=& f(m_{1})^{n}\ast
{n'}^{\text{  \ }m_{2}} &=& f\left( m_{1}^{\text{ \ }n}\right) \ast
{n'}^{\text{ \ }m_{2}} &\in {\Image}\left( f\ast \id_{N}\right),\\

\left[ n\ast f(m_{1}),m_{2}\ast n'\right] &=& {^{n}}f(m_{1})\ast
{^{m_{2}}}n' &=& f\left( {^{n}}m_{1}\right) \ast {^{m_{2}}}n' & \in {\Image}
\left( f\ast \id_{N}\right),\\

\left[ n\ast f(m_{1}),n'\ast m_{2}\right] &=& {^{n}}f(m_{1})\ast
n'^{\text{ \ }m_{2}} &=& f\left( {^{n}}m_{1}\right) \ast {n'}^{\text{
\ }m_{2}} & \in {\Image}\left( f\ast \id_{N}\right),\\

\left[ m_{2}\ast n',f(m_{1})\ast n\right] &=& m_{2\text{ }}^{\text{ \ }n'}\ast ^{f(m_{1})}n &=& m_{2\text{ }}^{\text{ \ }n'}\ast f\left(
^{m_{1}}n\right) & \in {\Image}\left( f\ast \id_{N}\right),\\

\left[ m_{2}\ast n',n\ast f(m_{1})\right] &=& m_{2\text{ }}^{\text{ \ }n'}\ast n^{f(m_{1})} &=& m_{2\text{ }}^{n'}\ast f\left(
n^{\text{ }^{m_{1}}}\right) & \in {\Image}\left( f\ast \id_{N}\right),\\

\left[ n'\ast m_{2},f(m_{1})\ast n\right] &=& {^{n'}}m_{2}\ast
^{f(m_{1})}n &=& {^{n'}}m_{2}\ast f\left( ^{m_{1}}n\right) & \in {\Image}
\left( f\ast \id_{N}\right),\\

\left[ n'\ast m_{2},n\ast f(m_{1})\right] &=& {^{n'}}m_{2}\ast
n^{f(m_{1})} &=& {^{n'}}m_{2}\ast f\left( n^{\text{ }^{m_{1}}}\right)
& \in {\Image}\left( f\ast \id_{N}\right),
\end{array} \]
 \[
 \begin{array}{lccl}
\alpha _{M_{2}\ast N}\left( f(m_1)\ast n\right) &=& f(\alpha _{M_{2}}\left(
m_1\right) )\ast \alpha _{N}\left( n\right) & \in {\Image}\left( f\ast \id_{N}\right),  \\
\alpha _{M_{2}\ast N}\left( n \ast f(m_1)\right) &=& f(\alpha _{N}\left( n\right) \ast \alpha _{M_{2}}\left(
m_1\right) ) & \in {\Image}\left( f\ast \id_{N}\right),
\end{array} \]
for any $m_{1}\in M_1$, $m_{2}\in M_2$, $n, n'\in N$.
Thus, $g\ast \id_{N}$ induces a homomorphism of Hom-Leibniz algebras
\[
(\left( M_{2}\ast N\right) / {\Image}\left( f\ast \id_{N}\right) ,%
\overline{\alpha }_{M_{2}\ast N}){\longrightarrow }(M_{3}\ast N,\alpha _{M_{3}\ast N}),
\]
given on generators  by $ \overline{m_{2}\ast n} \mapsto g(m_{2})\ast n$ and $ \overline{n\ast m_{2}} \mapsto n\ast g(m_{2})$, which is an isomorphism with the inverse map
\[
(M_{3}\ast N,\alpha_{M_{3}\ast N})\longrightarrow (\left( M_{2}\ast N\right) /{\Image}\left(
f\ast \id_{N}\right) ,\overline{\alpha }_{M_{2}\ast N})
\]
defined by $m_{3}\ast n\mapsto  \overline{m_{2}\ast n}$ and $n\ast m_{3}= \overline{n\ast m_{2}}$, where $m_{2}\in M_{2}$ such that $g(m_{2})=m_{3}.$
Then the required exactness follows.
\end{proof}

\begin{proposition}\label{exact-tensor-2}
 If $(M,\alpha_M)$ is a two-sided Hom-ideal of a Hom-Leibniz algebra $(L, \alpha_L)$, then there is an exact sequence of Hom-Leibniz
algebras
\[
\big((M \ast  L)\rtimes (L\ast  M), \alpha_{\rtimes}\big) \overset{\sigma} {\longrightarrow}(L\ast L,\alpha_{L\ast L})\overset{\tau}{\longrightarrow}({L}/{M}\ast {L}/{M},\alpha_{{L}/{M}\ast {L}/{M}})\longrightarrow 0.
\]
where the Hom-Leibniz action of $L\ast M $ on $M \ast L$ is naturally given by the bracket in $L\ast M$.
\end{proposition}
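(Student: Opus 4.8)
The plan is to define the two maps and then verify exactness at each of the three spots, mimicking the argument of Proposition \ref{exact-tensor-1}. First I would define $\tau := q\ast q$, where $q:(L,\alpha_L)\twoheadrightarrow (L/M,\alpha_{L/M})$ is the canonical projection; this makes sense because $M$ being a two-sided Hom-ideal gives a Hom-Leibniz action of $L/M$ on itself compatible with the conjugation action of $L$ on itself and $q$ preserves these actions, so functoriality of $\ast$ applies. Clearly $\tau$ is an epimorphism, which gives exactness on the right. For $\sigma$, I would use the action of $L$ on $M$ and of $M$ on $L$ (restrictions of the bracket of $L$, which are compatible by Example \ref{compatible} since $M$ and $L$ are both Hom-ideals of $L$), together with the inclusion $M\hookrightarrow L$, to obtain via functoriality Hom-Leibniz homomorphisms $M\ast L\to L\ast L$ and $L\ast M\to L\ast L$; then $\sigma$ is the map induced on the semidirect product by these two, i.e. $\sigma(u,v)=\iota_1(u)+\iota_2(v)$ on the underlying vector space. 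One must check $\sigma$ is a Hom-Leibniz homomorphism: the semidirect product bracket mixes the two summands via the action of $L\ast M$ on $M\ast L$ (bracket in $L\ast M$ pushed along), and one checks this is compatible with the bracket of $L\ast L$ — this is the place where the precise shape of the relations (\ref{a)}) and the bracket (\ref{bracket}) must be invoked.

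Next, exactness in the middle. The inclusion $\Image\,\sigma\subseteq\Ker\,\tau$ is immediate since $q\circ\iota_M=0$ and $q\circ\iota_M=0$ kill every generator of the image. For the reverse inclusion I would argue as in Proposition \ref{exact-tensor-1}: $\Image\,\sigma$ is the subspace of $L\ast L$ spanned by all $m\ast l$, $l\ast m$, $l\ast m'$, $m'\ast l$ with $m,m'\in M$, $l\in L$; call it $J$. Using the bracket formulas (\ref{bracket}) and the compatibility relations (\ref{compatibility}) one checks, exactly as in the eight displayed identities of the previous proof, that $J$ is a two-sided Hom-ideal of $(L\ast L,\alpha_{L\ast L})$ and that $\alpha_{L\ast L}(J)\subseteq J$. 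Then $\tau$ factors through $(L\ast L)/J$, and one constructs an inverse $(L/M\ast L/M,\alpha)\to ((L\ast L)/J,\overline\alpha_{L\ast L})$ on generators by $\bar l_1\ast\bar l_2\mapsto\overline{l_1\ast l_2}$, $\bar l_2\ast\bar l_1\mapsto\overline{l_2\ast l_1}$; this is well defined precisely because the ambiguity in choosing lifts $l_i$ lies in $M$, hence lands in $J$, and it respects the defining relations of $L/M\ast L/M$ modulo $J$ by the same relation-checking. Since this inverse is clearly two-sided to the induced map, $\Ker\,\tau=J=\Image\,\sigma$.

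I expect the main obstacle to be \emph{not} the exactness bookkeeping (which parallels Proposition \ref{exact-tensor-1}) but rather verifying that $\sigma$ is genuinely a homomorphism of Hom-Leibniz algebras — in particular that the semidirect-product bracket on $(M\ast L)\rtimes(L\ast M)$, whose cross terms are governed by the action ``bracket in $L\ast M$'', is carried by $\sigma$ to the bracket (\ref{bracket}) of $L\ast L$. This requires matching, generator by generator, expressions such as $[\,{}^l m,\,l'\,]\ast(\dots)$ produced on the left with the corresponding $m^{\,l}\ast{}^{l'}l''$-type terms on the right, and here one leans on the Hom-Leibniz action axioms of Definition \ref{Hom accion} and the compatibility conditions (\ref{compatibility}); it is routine but is the one spot where something could go wrong if the action of $L\ast M$ on $M\ast L$ were not set up correctly. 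A secondary point to be careful about is that $\alpha_\rtimes$ is respected, which is immediate from the definitions of $\alpha_{M\ast L}$, $\alpha_{L\ast M}$ and $\alpha_{L\ast L}$.
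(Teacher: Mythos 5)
Your overall architecture (define $\tau$ functorially from the projection $q$, define $\sigma$ from the two functorial maps $M\ast L\to L\ast L$ and $L\ast M\to L\ast L$, show the image is a two-sided Hom-ideal and construct a two-sided inverse to the induced map, exactly as in Proposition \ref{exact-tensor-1}) is the same as the paper's, which indeed leaves the exactness bookkeeping to the reader. However, there is a genuine gap precisely at the spot you flag as the danger zone: your definition $\sigma(u,v)=\iota_1(u)+\iota_2(v)$ is \emph{not} a homomorphism of Hom-Leibniz algebras in general. The semidirect-product bracket of Definition \ref{producto semidirecto} has cross terms governed by the action of the $\alpha$-\emph{image} of the second component: $[(x,0),(0,y)]=\big(x^{\,\alpha_{L\ast M}(y)},0\big)$. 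Taking $x=m\ast l$ and $y=l'\ast m'$ and using the bracket action, one gets $x^{\,\alpha(y)}=[\,m\ast l,\ \alpha_L(l')\ast\alpha_M(m')\,]=m^{\,l}\ast\alpha_L([l',m'])$, whereas your $\sigma$ would require this to equal $[\iota_1(m\ast l),\iota_2(l'\ast m')]=[\,m\ast l,\ l'\ast m'\,]=m^{\,l}\ast[l',m']$. These differ by an $\alpha_L$ in the second slot, so your $\sigma$ respects the bracket only when $\alpha_L=\id$ (the Leibniz case of \cite{Gn}), not in general.

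The paper's fix is to twist the second component: it sets $\sigma(x,y)=\sigma'(x)+\alpha_{L\ast L}\big(\sigma''(y)\big)$, i.e.\ $\sigma\big((m_1\ast l_1),(l_2\ast m_2)\big)=m_1\ast l_1+\alpha_L(l_2)\ast\alpha_M(m_2)$, and similarly on the other generators. With this definition the computation above closes up: $[\sigma(x,0),\sigma(0,y)]=[\,m\ast l,\ \alpha_L(l')\ast\alpha_M(m')\,]=m^{\,l}\ast\alpha_L([l',m'])=\sigma'\big(x^{\,\alpha(y)}\big)$. Note that this change is harmless for exactness, since $\alpha_L(l)\ast\alpha_M(m)$ is again a generator of the form $l''\ast m''$ with $m''\in M$, so the image of $\sigma$ is still the span of all $m\ast l$ and $l\ast m$, and the rest of your argument (the ideal property of this span and the construction of the inverse on $(L\ast L)/\Image\,\sigma$) goes through as you describe. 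You should therefore replace your definition of $\sigma$ by the $\alpha$-twisted one and redo the homomorphism check; everything else in your plan is consistent with the paper.
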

\begin{proof} First we note that { $\tau$} is the functorial homomorphism induced by
projection $\left( L,\alpha _{L}\right) \twoheadrightarrow \left( L/M,\alpha_{L/M}\right)$ and clearly it is an epimorphism.

 Let $\sigma ^{\prime }:(M \ast L,\alpha _{M \ast L})\to (L\ast L,\alpha _{L\ast L})$, $\sigma ^{\prime \prime }:(L\ast M,\alpha_{L\ast M})\to (L\ast L,\alpha _{L\ast L})$  be the functorial homomorphisms induced by the inclusion $(M,\alpha _{M})\hookrightarrow (L,\alpha _{L})$ and the identity map $(L,\alpha _{L})\to (L, \alpha _{L}).$
Then $\sigma : \left( \left(M \ast L \right) \rtimes \left( L \ast M\right), \alpha_{\rtimes} \right) \to (L \ast L, \alpha_{L \ast L})$, is defined by $\sigma \left( x,y\right) =\sigma ^{\prime }(x)+\alpha_{L \ast L} \circ \sigma ^{\prime \prime}(y)$ for all $x\in M \ast L$ and $y\in L \ast M$, i. e.
 \begin{align*}
 \sigma((m_1 \ast l_1), (l_2 \ast m_2))=m_1 \ast l_1 + \alpha_L(l_2) \ast \alpha_M(m_2),\\
 \sigma((l_1 \ast m_1), (l_2 \ast m_2))=l_1 \ast m_1 + \alpha_L(l_2) \ast \alpha_M(m_2),\\
 \sigma((m_1 \ast l_1), (m_2 \ast l_2))=m_1 \ast l_1 + \alpha_M(m_2) \ast \alpha_L(l_2),\\
 \sigma((l_1 \ast m_1), (m_2 \ast l_2))=l_1 \ast m_1 + \alpha_M(m_2) \ast \alpha_L(l_2),
 \end{align*}
 for all $m_1, m_2\in M$ and $l_1,l_2\in L$.

 The exactness can be checked in the same way as in the proof of Proposition \ref{exact-tensor-1} and we omit it.
 \end{proof}

\begin{proposition} \label{action-on-tensor}
Let  $(M,\alpha_M)$ and $(N,\alpha_N)$ be Hom-Leibniz algebras with compatible actions on each other.
 \begin{enumerate}
\item[a)] There are homomorphisms of Hom-Leibniz algebras
\[\begin{array}{lll}
\psi _{1}:(M\ast N,\alpha _{M\ast N})\rightarrow (M,\alpha _{M}),&
\psi _{1}(m\ast n)= m^n, & \psi_{1}(n\ast m)= {^n}m,\\
\psi _{2}:(M\ast N,\alpha _{M\ast N})\rightarrow (N,\alpha _{N}),&
\psi _{2}(m\ast n)={^m}n, & \psi_{2}(n\ast m)={n}^m.
\end{array}
\]

\item[b)]
There is a Hom-Leibniz action of  $(M, \alpha_{M })$ (resp.  $(N, \alpha_{N})$ )  on  the non-abelian tensor product $(M\ast N, \alpha_{M \ast N})$  given, for all $m,m'\in M$, $n,n'\in N$, by
\[\begin{array}{rcl}
{}^{m^{\prime }}(m\ast n)&=&{}[m^{\prime },m]\ast \alpha
_{N}(n)-^{m^{\prime }}n\ast \alpha _{M}(m), \\
{}^{m^{\prime }}(n\ast m)&=& ^{m^{\prime }}n\ast \alpha
_{M}(m)-[m^{\prime },m]\ast \alpha _{N}(n),\\
(m\ast n)^{m^{\prime }}&=&[m,m^{\prime }]\ast \alpha _{N}(n)+\alpha
_{M}(m)\ast {}n^{m^{\prime }}, \\
(n\ast m)^{m^{\prime }}&=&n^{m^{\prime }}\ast \alpha _{M}(m)+\alpha
_{N}(n)\ast \lbrack m,m^{\prime }],

\end{array}\]

\[
\begin{array}{rcl}
\big(\ \text{resp.} \
^{n^{\prime }}(m\ast n)&=& ^{n^{\prime }}m\ast \alpha
_{N}(n)-\left[n^{\prime },n\right] \ast \alpha _{M}(m), \\
^{n^{\prime }}(n\ast m) &=& \left[ n^{\prime },n\right] \ast \alpha
_{M}(m)-^{n}m\ast \alpha _{N}(n^{\prime }),\\
 (m\ast n)^{n^{\prime }}&=& m^{n^{\prime }}\ast \alpha _{N}(n)+\alpha
_{M}(m)\ast \left[ n,n^{\prime }\right], \\
{}(n\ast m)^{n^{\prime }}&=&\left[ n,n^{\prime }\right] \ast \alpha
_{M}(m)+\alpha _{N}(n)\ast m^{n^{\prime }}\ \big).
\end{array}
\]

\item[d)] $\Ker(\psi _{1})$ and $\Ker(\psi _{2})$ both are contained in the center of $(M\ast N, \alpha_{M \ast N})$.

\item[e)] The induced Hom-Leibniz action of $\Image(\psi _{1})$ (resp. $\Image(\psi _{2})$) on  $\Ker(\psi_{1})$ (resp. $\Ker(\psi _{2})$) is trivial.

        \item[f)]  $\psi_1$ and $\psi_2$ satisfy the following properties for all $m, m' \in M$, $n, n' \in N$:
\begin{enumerate}
\item[i)] $\psi_1(^{m'}(m \ast n)) = [\alpha_M(m'), \psi_1(m \ast n)]$,
\item[ii)] $\psi_1((m \ast n)^{m'}) = [\psi_1(m \ast n),\alpha_M(m')]$,
\item[iii)] $\psi_1(^{m'}(n \ast m)) = [\alpha_M(m'), \psi_1(n \ast m)]$,
\item[iv)] $\psi_1((n \ast m)^{m'}) = [ \psi_1(n \ast m), \alpha_M(m')]$,
\item[v)] $\psi_2(^{n'}(m \ast n)) = [\alpha_N(n'), \psi_2(m \ast n)]$,
\item[vi)] $\psi_2((m \ast n)^{n'}) = [\psi_2(m \ast n),\alpha_N(n')]$,
\item[vii)] $\psi_2(^{n'}(n \ast m)) = [\alpha_N(n'), \psi_2(n \ast m)]$,
\item[viii)] $\psi_2((n \ast m)^{n'}) = [ \psi_2(n \ast m), \alpha_N(n')]$,
\item[ix)] ${^{\psi_1(m \ast n)}}(m' \ast n') = [\alpha_{M \ast N}(m \ast n), m' \ast n'] = {^{\psi_2(m \ast n)}} (m' \ast n')$,
\item[x)] ${^{\psi_1(m \ast n)}}(n' \ast m') = [\alpha_{M \ast N}(m \ast n), n' \ast m'] = {^{\psi_2(m \ast n)}} (n' \ast m')$,
 \item[xi)] ${^{\psi_1(n \ast m)}}(m' \ast n') = [\alpha_{M \ast N}(n \ast m), m' \ast n'] = {^{\psi_2(n \ast m)}} (m' \ast n')$,
\item[xii)] ${^{\psi_1(n \ast m)}}(n' \ast m') = [\alpha_{M \ast N}(n \ast m), n' \ast m'] = {^{\psi_2(n \ast m)}} (n' \ast m')$,
 \item[xiii)] $(m' \ast n')^{\psi_1(m \ast n)} = [ m' \ast n',\alpha_{M \ast N}(m \ast n)] =  (m' \ast n')^{\psi_2(m \ast n)}$,
 \item[xiv)] $(n' \ast m')^{\psi_1(m \ast n)} = [ n' \ast m',\alpha_{M \ast N}(m \ast n)] =  (n' \ast m')^{\psi_2(m \ast n)}$,   \item[xv)] $(m' \ast n')^{\psi_1(n \ast m)} = [ m' \ast n',\alpha_{M \ast N}(n \ast m)] =  (m' \ast n')^{\psi_2(n \ast m)}$,
     \item[xvi)] $(n' \ast m')^{\psi_1(n \ast m)} = [ n' \ast m',\alpha_{M \ast N}(n \ast m)] =  (n' \ast m')^{\psi_2(n \ast m)}$.
    \end{enumerate}

\end{enumerate}
\end{proposition}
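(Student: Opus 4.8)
The plan is to establish all the claims in Proposition~\ref{action-on-tensor} in the natural logical order: first the two homomorphisms $\psi_1,\psi_2$ of part~(a), then the actions in part~(b), and finally to read off parts~(d), (e), and~(f) as consequences. Since the non-abelian Hom-Leibniz tensor product is defined by generators and relations, the core of every verification is the same: one checks that a given formula, defined on generators $m\ast n$ and $n\ast m$, respects each of the sixteen defining relations in~(\ref{a)}), and then that it is compatible with the bracket~(\ref{bracket}) (and, where relevant, with $\alpha_{M\ast N}$). I would carry these out once carefully for $\psi_1$, observing that the relations in~(\ref{a)}) are precisely engineered so that, after applying $\psi_1$, each relation becomes either the Hom-Leibniz identity~(\ref{def}), a $\alpha$-multiplicativity identity, or one of the compatibility identities~(\ref{compatibility}) in $M$; the case of $\psi_2$ is symmetric. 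That $\psi_1$ preserves the bracket amounts, on generators, to identities such as $\psi_1([m\ast n,m'\ast n'])=\psi_1(m^n\ast{}^{m'}n')=(m^n)^{({}^{m'}n')}$, which by the third compatibility relation in~(\ref{compatibility}) equals $[m^n,{}^n m'\,]$ — wait, more precisely one matches it against $[\psi_1(m\ast n),\psi_1(m'\ast n')]=[m^n,{}^{m'}n']$ after the appropriate compatibility substitution; I would line these four bracket cases up against the four rows of~(\ref{compatibility}).

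For part~(b), the strategy is to verify the eight axioms (a)--(h) of Definition~\ref{Hom accion} for the proposed action of $(M,\alpha_M)$ on $(M\ast N,\alpha_{M\ast N})$; the action of $(N,\alpha_N)$ is handled by the evident left-right symmetry of the whole setup. Axioms~(g) and~(h) (compatibility with the $\alpha$'s) are immediate from the definition of $\alpha_{M\ast N}$ together with multiplicativity of $\alpha_M,\alpha_N$. The remaining axioms unwind, after substituting the defining formulas, into combinations of: the defining relations~(\ref{a)}), the bracket formulas~(\ref{bracket}), the compatibility relations~(\ref{compatibility}), and the Hom-Leibniz identity in $M$ and in $N$. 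A useful organizing observation is that $\psi_1$ and $\psi_2$ intertwine this action with the bracket of $(M\ast N)$ in the precise sense recorded in part~(f); indeed the cleanest route is to prove a slightly stronger statement first — that the formulas in~(b) agree with $[\,-,-\,]$ pulled back along $\psi_1$ (equivalently along $\psi_2$) — which simultaneously delivers part~(b), the equalities in~(f)(ix)--(xvi), and makes~(f)(i)--(viii) a one-line consequence of $\psi_1,\psi_2$ being homomorphisms.

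Parts~(d) and~(e) then follow formally. For~(d): if $m\ast n\in\Ker\psi_1$, then by the identities of~(f)(ix)--(xvi) (in the form just discussed) every bracket $[m\ast n,\,-\,]$ and $[\,-,\,m\ast n]$ is governed by ${}^{\psi_1(m\ast n)}(-)$ and $(-)^{\psi_1(m\ast n)}$ with $\psi_1(m\ast n)=0$, hence vanishes; so $\Ker\psi_1\subseteq Z(M\ast N)$, and symmetrically for $\psi_2$. For~(e): the induced action of $\Image\psi_1$ on $\Ker\psi_1$ is, via the same intertwining, the restriction of the bracket of $M\ast N$ to $(M\ast N)\otimes\Ker\psi_1$, which is zero by~(d); likewise for $\psi_2$.

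I expect the main obstacle to be purely bookkeeping rather than conceptual: the sign and variable-placement conventions in~(\ref{a)}), (\ref{compatibility}) and~(\ref{bracket}) are intricate (note the stray ``$b$'' in two of the displayed relations, which I read as typographical), so the real work is to set up the correspondence ``relation in~(\ref{a)}) $\leftrightarrow$ instance of~(\ref{compatibility})/(\ref{def})'' cleanly enough that the four-fold case analysis ($m\ast n$ vs.\ $n\ast m$ on each side) does not proliferate. Once that dictionary is fixed, every item is a short substitution, which is why the original proof can reasonably be compressed to ``routine calculations''.
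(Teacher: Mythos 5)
Your overall plan --- verify everything on generators against the defining relations (\ref{a)}), the bracket (\ref{bracket}), the action axioms of Definition \ref{Hom accion} and the compatibility conditions (\ref{compatibility}) --- is exactly the approach of the paper, whose proof consists of saying that such routine combinations suffice. However, two of the logical shortcuts you layer on top of this are flawed, and one of them breaks your proof of part d). The identities f)(ix)--(xvi) carry an $\alpha_{M\ast N}$ on the bracket side: ${}^{\psi_1(m\ast n)}(m'\ast n')=[\alpha_{M\ast N}(m\ast n),m'\ast n']$. Hence for $t\in\Ker(\psi_1)$ they only yield $[\alpha_{M\ast N}(t),s]=0=[s,\alpha_{M\ast N}(t)]$, i.e.\ centrality of $\alpha_{M\ast N}(t)$, not of $t$ itself; your deduction of d) from f) therefore does not go through. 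The correct (and simpler) route is direct: by (\ref{bracket}) every bracket of generators equals $\psi_1(t)\ast\psi_2(s)$, which by the last four relations of (\ref{a)}) also equals $\psi_2(t)\ast\psi_1(s)$; extending bilinearly, $\psi_1(t)=0$ kills both $[t,s]$ and $[s,t]=\psi_2(s)\ast\psi_1(t)$. Your argument for e) is then fine, since it only needs d) together with f).

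The second overstatement is the claim that the formulas in b) are ``the bracket pulled back along $\psi_1$'' and that f)(i)--(viii) follow in one line from $\psi_1,\psi_2$ being homomorphisms. The action in b) is defined for \emph{every} $m'\in M$, not only for $m'\in\Image(\psi_1)$, so it cannot be obtained by pulling back the bracket; and f)(i)--(viii) are equivariance statements whose verification genuinely uses the axioms of Definition \ref{Hom accion} and the relations (\ref{compatibility}) (for instance f)(i) combines axiom e) of Definition \ref{Hom accion} with ${}^{({}^{m'}n)}m''=[m'^{\,n},m'']$ to cancel the term $[m'^{\,n},\alpha_M(m)]$), not merely multiplicativity of $\psi_1$. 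Finally, a small but real slip in your part a) check: $\psi_1(m'\ast n')=m'^{\,n'}$, so the bracket-preservation identity must land on $[m^n,m'^{\,n'}]$ via the compatibility relation $m^{({}^{m'}n)}=[m,m'^{\,n}]$, not on $[m^n,{}^{m'}n']$. None of this affects the truth of the proposition, and the honest generator-by-generator verification you also describe does work, but as written the proposal does not establish d) and misattributes the source of f).
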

\begin{proof}
 The proof requires routine combinations of equations in Definitions \ref{Hom accion} and \ref{compatibility1} with the defining relations (\ref{a)}) and (\ref{bracket}) of the non-abelian Hom-Leibniz tensor product.
\end{proof}

\begin{remark}\label{remark} \
\begin{enumerate}
\item[a)] If $\alpha_M=\id_M$ and $\alpha_N=\id_N$ then the statement f) of Lemma \ref{action-on-tensor} says that both $\psi_1$ and $\psi_2$ are crossed modules of Leibniz algebras (see \cite{Gn}).
\item[b)] If $M=N$ and the action of $M$ on itself is given by the bracket in $M$, then $\psi_1=\psi_2$ and they are defined on generators by $m\ast m'\mapsto [m,m']$. In such a case we write $\psi_M$ for $\psi_1$.
\end{enumerate}
\end{remark}


\section{ Application in universal ($\alpha$-)central extensions of Hom-Leibniz algebras}\label{section4}

In this section we complement by new results the investigation of universal central extensions of Hom-Leibniz algebras done in \cite{CIP1}. We also describe universal ($\alpha$)-central extensions via non-abelian Hom-Leibniz tensor product.

\begin{definition} \label{alfacentral} An epimorphism of Hom-Leibniz algebras
  $(K,\alpha_K) \stackrel{\pi}{\twoheadrightarrow} (L, \alpha_L)$, with $\Ker(\pi)=(M,\alpha_M)$, is called a central (resp. $\alpha$-central) extension of $(L, \alpha_L)$ if $[M, K] = 0 = [K,M]$, i.e. $M \subseteq Z(K)$ (resp. $[\alpha_M(M), K] = 0 = [K, \alpha_M(M)]$, i.e. $\alpha_M(M) \subseteq Z(K)$).

Such a central extension is called universal central (resp. $\alpha$-central) extension if, for every central (resp. $\alpha$-central) extension $(K',\alpha_{K'}) \stackrel{\pi} {\twoheadrightarrow} (L, \alpha_L)$ of $(L,\alpha_L)$ there exists one and only one homomorphism of Hom-Leibniz algebras $h : (K,\alpha_K) \to (K',\alpha_{K'})$ such that $\pi' \circ h = \pi$.
\end{definition}

\begin{remark}
Obviously every  central extension is an $\alpha$-central extension and these notions coincide when $\alpha_M = \id_M$. On the other hand, every universal $\alpha$-central extension  is a  universal  central extension and these notions coincide when $\alpha_M = \id_M$. Let us also observe that if a universal ($\alpha$-)central  extension exists then it is unique up to isomorphism.
\end{remark}

The category ${\sf HomLb}$ is an example of a semi-abelian category which does not satisfy universal central extension condition in the sense of \cite{CVdL}, that is, the composition of central extensions of Hom-Leibniz algebras is not central in general, but it is $\alpha$-central extension (see Theorem \ref{teorema} (a) below). This fact does not allow complete generalization of classical results to Hom-Leibniz algebras and the well-known properties of universal central extensions are divided between universal central and universal $\alpha$-central extensions of Hom-Leibniz algebras. In particular, the assertions in Theorem \ref{teorema} below are proved in \cite{CIP1}.

To state the following theorem we need to recall that a Hom-Lie algebra  $(L, \alpha_L)$ is said to be \emph{perfect} if $L=[L, L]$.

\begin{theorem}\label{teorema} \
\begin{enumerate}
       \item[a)] Let $(K,\alpha_K) \stackrel{\pi} \twoheadrightarrow (L, \alpha_L)$  and  $(F,\alpha_F) \stackrel{\rho} \twoheadrightarrow (K, \alpha_K)$ be  central extensions of Hom-Leibniz algebras and $(K, \alpha_K)$ be a perfect Hom-Leibniz algebra. Then the composition extension $(F,\alpha_F) \stackrel{\pi \circ \rho} \twoheadrightarrow (L, \alpha_L)$ is an $\alpha$-central extension.

    \item[b)] Let  $(K,\alpha_K) \stackrel{\pi} \twoheadrightarrow (L, \alpha_L)$ and $(K',\alpha_{K'}) \stackrel{\pi'} \twoheadrightarrow (L, \alpha_L)$ be two  central extensions of a Hom-Leibniz algebra $(L,\alpha_L)$. If $(K,\alpha_K)$ is perfect, then there exists at most one homomorphism of Hom-Leibniz algebras  $f : (K,\alpha_K) \to (K', \alpha_{K'})$ such that $\pi' \circ f = \pi$.

 \item[c)] Let  $(K,\alpha_K) \stackrel{\pi} \twoheadrightarrow (L, \alpha_L)$ be a central extension and $(K',\alpha_{K'}) \stackrel{\pi'} \twoheadrightarrow (L, \alpha_L)$ be an $\alpha$-central extension of a Hom-Leibniz algebra $(L,\alpha_L)$. If $(K,\alpha_K)$ is $\alpha$-perfect, then there exists at most one homomorphism of Hom-Leibniz algebras  $f : (K,\alpha_K) \to (K', \alpha_{K'})$ such that $\pi' \circ f = \pi$.

\item[d)]  If $ (K,\alpha_K) \stackrel{\pi} \twoheadrightarrow (L, \alpha_L)$ is a universal $\alpha$-central extension, then  $(K,\alpha_K)$ is a perfect   Hom-Leibniz algebra and every central extension of $(K,\alpha_K)$ splits.

    \item[e)] If $(K,\alpha_K)$ is a perfect  Hom-Leibniz algebra and every  central extension of $(K,\alpha_K)$ splits, then any central extension $(K,\alpha_K) \stackrel{\pi} \twoheadrightarrow (L, \alpha_L)$ is a universal central extension.

\item[f)] A Hom-Leibniz algebra $(L, \alpha_L)$ admits a universal central extension if and only if $(L, \alpha_L)$ is perfect.
Furthermore, the kernel of the universal central extension is canonically isomorphic to the second homology $HL_2^{\alpha}(L)$.

\item[g)] An $\alpha$-perfect Hom-Leibniz algebra admits a universal $\alpha$-central extension.

\item[h)] If $(K,\alpha_K) \stackrel{\pi} \twoheadrightarrow (L, \alpha_L)$ is a universal $\alpha$-central extension, then $HL_1^{\alpha}(K) = HL_2^{\alpha}(K) = 0$.

   \item[i)]   If $HL_1^{\alpha}(K) = HL_2^{\alpha}(K) = 0$, then any central extension  $(K,\alpha_K) \stackrel{\pi} \twoheadrightarrow (L, \alpha_L)$  is a universal  central extension.

\end{enumerate}
\end{theorem}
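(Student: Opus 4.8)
The plan is to transfer the classical theory of universal central extensions of Lie and Leibniz algebras to the Hom-Leibniz setting, using throughout: (i) that every two-sided Hom-ideal is $\alpha$-invariant, so $\alpha_L$ carries a central kernel into the centre; (ii) the Hom-Leibniz identity (\ref{def}); (iii) that in a perfect (resp. $\alpha$-perfect) Hom-Leibniz algebra every element is a finite sum of brackets $[a,b]$ (resp. $[\alpha_L(a),\alpha_L(b)]$); and (iv) pullbacks of extensions. The statements split into commutator computations (a)--(c), formal diagram arguments (d), (e), (h), (i), and the constructive existence halves of (f) and (g), where the real work lies.

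For (a), put $p=\pi\circ\rho$, so $\Ker(p)=\rho^{-1}(M)$ with $M=\Ker(\pi)$; for $x\in\Ker(p)$ and $f\in F$ one gets $\rho([\alpha_F(x),f])=[\alpha_K\rho(x),\rho(f)]=0$ because $\rho(x)\in M$, $M$ is $\alpha_K$-invariant and $M\subseteq Z(K)$, hence $[\alpha_F(x),f]\in\Ker(\rho)\subseteq Z(F)$; then, using perfectness of $K$, write $f=\sum_i[a_i,b_i]+n$ with $n\in\Ker(\rho)$ and expand $[\alpha_F(x),[a_i,b_i]]$ by (\ref{def}), noting $[x,a_i],[x,b_i]\in\Ker(\rho)\subseteq Z(F)$, so every summand vanishes; the brackets $[f,\alpha_F(x)]$ are treated the same way. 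For (b) and (c): the difference $f-g$ of two lifts lands in $\Ker(\pi')$, which is central (resp. $\alpha$-central); expanding $f([a,b])-g([a,b])=[f(a),f(b)]-[g(a),g(b)]$ (resp. the same on $\alpha_K(a),\alpha_K(b)$) bilinearly, each cross term carries a factor in that (resp. $\alpha$-)central kernel and dies, so $f-g$ vanishes on all brackets (resp. $\alpha$-brackets), hence on $K$ by (resp. $\alpha$-)perfectness.

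Parts (d), (e), (h), (i) then follow formally. For (d): if $K$ were not perfect, $V=K/[K,K]\neq0$ and the central extension $(K\oplus V,\alpha_K\oplus\overline{\alpha}_K)\twoheadrightarrow(L,\alpha_L)$ via the first coordinate would have the two distinct lifts $k\mapsto(k,0)$ and $k\mapsto(k,\overline{k})$, contradicting uniqueness, so $K$ is perfect; then for any central extension $\rho\colon(F,\alpha_F)\twoheadrightarrow(K,\alpha_K)$, part (a) makes $\pi\circ\rho$ an $\alpha$-central extension of $L$, universality produces $h\colon K\to F$ with $\pi\rho h=\pi$, and part (b) forces $\rho h=\id_K$, so $\rho$ splits. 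Parts (e) and (i) are the pullback converses: given a second (resp. $\alpha$-)central extension $\pi'\colon K'\twoheadrightarrow L$, the projection $K\times_L K'\twoheadrightarrow K$ is a central extension which splits by hypothesis — in (i) because $HL_1^\alpha(K)=0$ means $K$ is perfect and $HL_2^\alpha(K)=0$ means, through (f), that every central extension of $K$ splits — and the other projection composed with a section gives the required map $K\to K'$, unique by (b). Part (h) is then immediate from (d): $K$ perfect gives $HL_1^\alpha(K)=K/[K,K]=0$, while the universal central extension of $K$ (existing by (f)) splits by (d), so its kernel $HL_2^\alpha(K)$ is zero.

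For the existence half of (f) I would realise the universal central extension of a perfect $(L,\alpha_L)$ as the non-abelian Hom-Leibniz tensor square $\psi_L\colon(L\ast L,\alpha_{L\ast L})\to(L,\alpha_L)$, $l\ast l'\mapsto[l,l']$, of Remark~\ref{remark}(b) (the adjoint actions of $L$ on itself being compatible by Example~\ref{compatible}): $\psi_L$ is onto by perfectness, central since $\Ker(\psi_L)\subseteq Z(L\ast L)$ by Proposition~\ref{action-on-tensor}(d), and $(L\ast L,\alpha_{L\ast L})$ is itself perfect (express a tensor of sums of brackets as a bracket of tensors via (\ref{bracket})); for any central extension $K'\twoheadrightarrow L$ the rule $l\ast l'\mapsto[\overline l,\overline l']$ (arbitrary lifts $\overline l$, well defined by centrality) is a homomorphism over $L$, unique because $L\ast L$ is perfect. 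The converse of (f) reuses the argument of (d), and $\Ker(\psi_L)\cong HL_2^\alpha(L)$ is obtained by comparing the presentation (\ref{a)}) with the boundary maps $d_1,d_2$ of the complex $CL_\star^\alpha(L,\K)$. Part (g) repeats this construction for the $\alpha$-perfect case with part (c) in place of (b). The main obstacle I expect is precisely this last block: proving $L\ast L$ (and its $\alpha$-perfect analogue) perfect, establishing the well-definedness and universality of the comparison map, and matching the tensor-product relations with the Hom-Leibniz chain complex to pin down the kernel — and it is here that one is forced, in general, to replace ``central'' by ``$\alpha$-central'', which is exactly what dictates the split form of statements (a) and (d)--(i). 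These verifications constitute the work of \cite{CIP1}.
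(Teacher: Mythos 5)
The first thing to note is that the paper does not prove this theorem at all: it states explicitly that ``the assertions in Theorem \ref{teorema} below are proved in \cite{CIP1}'', so there is no in-paper argument to compare against, and your reconstruction has to be judged on its own merits. On those merits it is essentially sound. Parts (a)--(c) are exactly the right commutator computations: in (a) the key points are that $[x,a],[a,x]\in\Ker(\rho)\subseteq Z(F)$ for $x\in\Ker(\pi\circ\rho)$, and that the Hom-Leibniz identity (\ref{def}) (used once in each slot) reduces $[\alpha_F(x),[a,b]]$ and $[[a,b],\alpha_F(x)]$ to brackets with a central entry; in (b), (c) the bilinear expansion with the difference landing in the ($\alpha$-)central kernel is the standard and correct argument. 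Parts (d), (e), (h), (i) are the usual formal consequences, and your logical ordering avoids circularity: (b) is independent, (f)-existence uses only (b), (d) uses (a) and (b), and (h), (i) use (d), (e) and the kernel computation of (f). Your existence construction for (f) and (g) via $\psi_L\colon(L\ast L,\alpha_{L\ast L})\twoheadrightarrow(L,\alpha_L)$ and its $\alpha$-perfect analogue is precisely what the paper itself proves later as Theorem \ref{teor} and Theorem \ref{alfa uce} (there, too, well-definedness of $l\ast l'\mapsto[c_l,c_{l'}]$ is asserted from centrality at the same level of detail you give). The one genuinely unproved step in your proposal is the identification $\Ker(\psi_L)\cong HL_2^{\alpha}(L)$: ``comparing the presentation (\ref{a)}) with the boundary maps $d_1,d_2$'' is a program, not a proof, and in \cite{CIP1} this is done by building the extension as a quotient of $L\otimes L$ by $\Image(d_3)$ and invoking uniqueness of universal central extensions. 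Since you flag this explicitly as the deferred content of \cite{CIP1}, I would count it as an honest gap in an otherwise correct reconstruction rather than a flaw in the approach.
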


Given a Hom-Leibniz algebra $(L,\alpha_L)$ we have the epimorphism of  Hom-Leibniz algebras
\[
 \psi_L:(L\ast L, \alpha_{L \ast L}) \twoheadrightarrow ([L,L], \alpha_{L \mid}), \quad  \psi_L(l \ast l')=[l,l'],
 \]
(cf. Remark \ref{remark} b)) which is a  central extension of $([L,L], \alpha_{L \mid})$ because of the equalities (\ref{bracket}). Moreover, we have

\begin{theorem} \label{teor}
If $(L,\alpha_L)$ is a perfect Hom-Leibniz algebra,  then the central extension $(L\ast L, \alpha_{L \ast L})\overset{\psi_L}\twoheadrightarrow (L, \alpha_L)$ is the universal central extension of $(L,\alpha_L)$.
\end{theorem}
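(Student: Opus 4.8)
The plan is to establish the universal property of Definition \ref{alfacentral} directly. The role of perfectness is twofold: since $[L,L]=L$, the target of $\psi_L$ is $(L,\alpha_L)$ itself, so $\psi_L$ is a central extension \emph{of $(L,\alpha_L)$} (centrality of $\Ker(\psi_L)$ in $(L\ast L,\alpha_{L\ast L})$ is already known from the equalities (\ref{bracket})); and, as I note below, $(L\ast L,\alpha_{L\ast L})$ itself becomes perfect, which is what gives uniqueness.

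First I would check that $(L\ast L,\alpha_{L\ast L})$ is perfect. Here $L$ acts on itself by its bracket, so (\ref{bracket}) reads $[\,l\ast l',\,m\ast m'\,]=[l,l']\ast[m,m']$. Using $L=[L,L]$ and the additivity relations in (\ref{a)}), an arbitrary generator $a\ast b$ of $L\ast L$ rewrites as a sum of elements $[l_i,l_i']\ast[m_j,m_j']=[\,l_i\ast l_i',\,m_j\ast m_j'\,]$, hence $L\ast L=[L\ast L,L\ast L]$.

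Next, let $\pi':(K',\alpha_{K'})\twoheadrightarrow(L,\alpha_L)$ be any central extension. For $l\in L$ pick a preimage $\bar l\in K'$; since $\Ker(\pi')$ is central in $K'$, the element $[\bar l,\bar l']_{K'}$ does not depend on the chosen preimages, so $\langle l,l'\rangle:=[\bar l,\bar l']_{K'}$ is a well-defined bilinear map $L\times L\to K'$ with $\pi'\langle l,l'\rangle=[l,l']$. The core step is to verify that $(l,l')\mapsto\langle l,l'\rangle$ respects the sixteen defining relations (\ref{a)}) of $L\ast L$. Choosing $\alpha_{K'}(\bar l)$ as a preimage of $\alpha_L(l)$ and $[\bar l_1,\bar l_2]_{K'}$ as a preimage of $[l_1,l_2]$, each relation becomes an identity in $K'$ which follows from bilinearity, from centrality of $\Ker(\pi')$, and from the Hom-Leibniz identity (\ref{def}): e.g. $\alpha_M(m)\ast[n,n']=m^n\ast\alpha_N(n')-m^{n'}\ast\alpha_N(n)$ lifts to $[\alpha_{K'}(\bar m),[\bar n,\bar n']]=[[\bar m,\bar n],\alpha_{K'}(\bar n')]-[[\bar m,\bar n'],\alpha_{K'}(\bar n)]$, which is precisely (\ref{def}); while $\alpha_M(m)\ast{}^{m'}n=-\alpha_M(m)\ast n^{m'}$ lifts to $[\alpha_{K'}(\bar m),[\bar m',\bar n]+[\bar n,\bar m']]=0$, again a consequence of (\ref{def}); and the remaining relations are of exactly the same nature (several of them, such as $m^n\ast{}^{m'}n'={}^mn\ast m'^{n'}$, become trivial once the self-action is written out). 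Hence $\langle-,-\rangle$ factors through a linear map $h:L\ast L\to K'$, $h(l\ast l')=\langle l,l'\rangle$, which commutes with the structure maps (because $\alpha_{K'}(\bar l)$ lifts $\alpha_L(l)$ and $\alpha_{K'}$ preserves the bracket) and preserves the bracket, since $h([\,l\ast l',m\ast m'\,])=h([l,l']\ast[m,m'])=[[\bar l,\bar l'],[\bar m,\bar m']]_{K'}=[h(l\ast l'),h(m\ast m')]_{K'}$. Finally $\pi'\circ h=\psi_L$ holds on generators, hence everywhere.

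For uniqueness, $\psi_L$ and $\pi'$ are central extensions of $(L,\alpha_L)$ and $(L\ast L,\alpha_{L\ast L})$ is perfect, so Theorem \ref{teorema}(b) allows at most one homomorphism $(L\ast L,\alpha_{L\ast L})\to(K',\alpha_{K'})$ over $(L,\alpha_L)$; thus $h$ is unique and $\psi_L$ is the universal central extension. The main obstacle is the bookkeeping in the third paragraph: matching each of the sixteen relations of $L\ast L$, together with $\alpha$-equivariance and bracket-compatibility, against the Hom-Leibniz identity in $K'$ and the centrality of $\Ker(\pi')$. No single check is difficult, but there are many of them; this is exactly where one pays for working over Hom-Leibniz rather than Leibniz algebras.
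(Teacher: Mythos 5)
Your proposal is correct and follows essentially the same route as the paper: define the lift on generators by $f(l\ast l')=[c_l,c_{l'}]$ using arbitrary preimages (well-defined by centrality of the kernel), check compatibility with the relations, the bracket and the structure maps, and deduce uniqueness from perfectness of $(L\ast L,\alpha_{L\ast L})$ together with Theorem \ref{teorema} b). The only difference is that you spell out the verification of the sixteen relations in (\ref{a)}), which the paper compresses into the phrase ``we get a well-defined homomorphism.''
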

\begin{proof} Let $(C, \alpha_C) \overset{\phi}\twoheadrightarrow (L, \alpha_L)$ be a central extension of $(L, \alpha_L)$. Since $\Ker\left(\phi\right) \subseteq Z\left(  C \right)$, we get a well-defined homomorphism of Hom-Leibniz algebras $f:(L\ast L,$ $ \alpha_{L\ast L})\to (C, \alpha_C)$ given on generators by $f(l\ast l')=[c_l,c_{l'}]$, where $c_l$ and $c_{l'}$ are any elements in $\phi^{-1}(l)$ and $\phi^{-1}(l')$, respectively. Obviously $\phi \circ f= \psi_L$ and $f \circ \alpha_{L \ast L} = \alpha_C \circ f$, having in mind that $\alpha_C(c_l) \in \phi^{-1}(\alpha_L(l))$ for all $l \in L$.  Since $L$ is perfect, then by equalities (\ref{bracket}), so is $L\ast L$. Hence the homomorphism $f$ is unique by Theorem \ref{teorema} {\it b)}.
\end{proof}

\begin{remark} \label{H2}
If the Hom-Leibniz algebra $(L,\alpha_L)$ is perfect, by Theorem \ref{teorema} {e)} we have $HL_2^{\alpha}(L)\approx  \Ker(L\ast L\overset{\psi_L}\to L )$.
\end{remark}

\begin{theorem} \label{sucesion exacta}
Let $(M,\alpha_M)$ be a two-sided Hom-ideal of a perfect Hom-Leibniz algebra $(L,\alpha_L)$. Then there is an exact sequence of vector spaces
 \[
 \Ker(L\ast M\overset {\psi_{2}\ }\longrightarrow L)\to HL_2^{\alpha}(L)\to HL_2^{\alpha}({L}/{M})\to {M}/{[L,M]}\to  0.
 \]
\end{theorem}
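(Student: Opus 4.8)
The plan is to realize this sequence as the outcome of the snake lemma applied to the morphism from the universal central extension of $(L,\alpha_L)$ to that of $(L/M,\alpha_{L/M})$, and then to rewrite the resulting six-term sequence using the description of the kernel of the natural map $L\ast L\to(L/M)\ast(L/M)$ supplied by Proposition~\ref{exact-tensor-2}.

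First I would collect the inputs. Since $L=[L,L]$, the quotient $L/M$ is perfect as well, because $[L/M,L/M]$ is the image of $[L,L]=L$. Hence, by Remark~\ref{H2} (which rests on Theorem~\ref{teorema}), the central extensions $\psi_L\colon(L\ast L,\alpha_{L\ast L})\twoheadrightarrow(L,\alpha_L)$ and $\psi_{L/M}\colon((L/M)\ast(L/M),\alpha)\twoheadrightarrow(L/M,\alpha_{L/M})$ are surjective with kernels $HL_2^{\alpha}(L)$ and $HL_2^{\alpha}(L/M)$ respectively. The functorial homomorphism $\tau\colon L\ast L\to(L/M)\ast(L/M)$ of Proposition~\ref{exact-tensor-2} and the projection $p\colon L\twoheadrightarrow L/M$ satisfy $p\circ\psi_L=\psi_{L/M}\circ\tau$, so passing to kernels gives a commutative diagram of $\K$-vector spaces with exact rows
\[
\begin{array}{ccccccccc}
0 & \to & HL_2^{\alpha}(L) & \to & L\ast L & \xrightarrow{\ \psi_L\ } & L & \to & 0\\
 & & \downarrow\bar\tau & & \downarrow\tau & & \downarrow p & & \\
0 & \to & HL_2^{\alpha}(L/M) & \to & (L/M)\ast(L/M) & \xrightarrow{\ \psi_{L/M}\ } & L/M & \to & 0,
\end{array}
\]
where $\bar\tau$ is the map induced by $p$ on second homology.

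Next I would run the snake lemma. By Proposition~\ref{exact-tensor-2} the map $\tau$ is onto and $\Ker\tau=\Image\sigma$ equals the subspace $J\subseteq L\ast L$ spanned by all mixed generators $m\ast l$, $l\ast m$ with $m\in M$, $l\in L$; also $\Ker p=M$ and $\Coker p=\Coker\tau=0$. The snake sequence therefore collapses to
\[
0\longrightarrow\Ker\bar\tau\longrightarrow J\xrightarrow{\ \psi_L\ }M\xrightarrow{\ \partial\ }HL_2^{\alpha}(L/M)/\Image\bar\tau\longrightarrow 0,
\]
the middle arrow being the restriction of $\psi_L$ to $J$. From $\psi_L(m\ast l)=[m,l]$ and $\psi_L(l\ast m)=[l,m]$ one sees that its image is precisely the commutator $[L,M]$, so $\partial$ descends to an isomorphism $M/[L,M]\xrightarrow{\ \sim\ }HL_2^{\alpha}(L/M)/\Image\bar\tau$. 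Composing its inverse with the quotient map $HL_2^{\alpha}(L/M)\twoheadrightarrow HL_2^{\alpha}(L/M)/\Image\bar\tau$ yields a surjection $\delta\colon HL_2^{\alpha}(L/M)\to M/[L,M]$ with $\Ker\delta=\Image\bar\tau$ (concretely, $\delta(\bar z)=\psi_L(z)+[L,M]$ for any $\tau$-preimage $z$ of $\bar z$). Hence $HL_2^{\alpha}(L)\xrightarrow{\ \bar\tau\ }HL_2^{\alpha}(L/M)\xrightarrow{\ \delta\ }M/[L,M]\to 0$ is exact.

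It remains to splice in the first term. The map $\Ker\psi_2\to HL_2^{\alpha}(L)$ is the restriction to $\Ker\psi_2$ of the functorial homomorphism $\sigma''\colon L\ast M\to L\ast L$ of Proposition~\ref{exact-tensor-2}, where $\psi_2\colon L\ast M\to L$ is the structure homomorphism $l\ast m\mapsto[l,m]$, $m\ast l\mapsto[m,l]$. One has $\psi_L\circ\sigma''=\psi_2$ and $\Image\sigma''=J$, so $\sigma''(\Ker\psi_2)\subseteq\Ker\psi_L\cap J=\Ker\bar\tau$; conversely any $z\in\Ker\psi_L\cap J$ can be written $z=\sigma''(y)$ with $y\in L\ast M$, and then $\psi_2(y)=\psi_L(z)=0$, so $y\in\Ker\psi_2$. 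Thus $\sigma''$ restricts to a surjection $\Ker\psi_2\twoheadrightarrow\Ker\bar\tau$, which is exactly exactness at $HL_2^{\alpha}(L)$, and the proof is complete. The one step demanding genuine care is the identification $\Ker\tau=J=\Image\sigma''$ and the ensuing surjectivity $\Ker\psi_2\twoheadrightarrow\Ker\psi_L\cap J$; these rely on the precise form of Proposition~\ref{exact-tensor-2} and on the compatibility $\psi_L\circ\sigma''=\psi_2$, whereas the snake-lemma bookkeeping, the computation of $\Image(\psi_L|_J)=[L,M]$, and the perfectness of $L/M$ are routine.
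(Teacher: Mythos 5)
Your proof is correct and follows essentially the same route as the paper: both arguments apply the Snake Lemma to the commutative diagram assembled from Proposition \ref{exact-tensor-2} and the identification $HL_2^{\alpha}(-)\cong \Ker(\psi_{(-)})$ of Remark \ref{H2}, and both identify $M/[L,M]$ as the relevant cokernel. The only differences are cosmetic: you run the snake along the transposed diagram (rows the two central extensions, columns induced by $L\twoheadrightarrow L/M$) and you splice in $\Ker(\psi_2)$ explicitly via $\sigma''$ using $\psi_L\circ\sigma''=\psi_2$ and $\Image(\sigma'')=\Ker(\tau)$, which actually makes precise the paper's terse closing remark about a surjection relating $\Ker(\psi)$ and $\Ker(\psi_2)$.
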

\begin{proof} By Proposition \ref{exact-tensor-2} there is a commutative diagram of Hom-Leibniz algebras with exact rows
{\footnotesize\[
 \xymatrix{
  & \big((M \ast  L)\rtimes (L\ast  M), \alpha_{\rtimes}\big)  \ar[r]^{\ \ \ \ \ \ \ \ \sigma} \ar[d]^{\psi}& (L \ast L,\alpha_{L \ast L}) \ar[r]^{\tau \ \ \ \ \ \ \ \ } \ar[d]^{\psi_L} & ({L}/{M} \ast {L}/{M}, {\alpha}_{{L}/{M} \ast {L}/{M}}) \ar[r]  \ar[d]^{{\psi_{L/M}}}& 0\\
0 \ar[r] & (M, \alpha_M) \ar[r] & (L,\alpha_L) \ar[r]^{\pi} & ({L}/{M},{\alpha}_{L/M}) \ar[r] & 0
}
\]}

\noindent where
\[
\begin{array}{crcl}
&\psi((m_1\ast l_1),(l_2\ast m_2))&=&[m_1,l_1]+[\alpha_L(l_2),\alpha_M(m_2)],\\
&\psi((l_1\ast m_1),(l_2\ast m_2))&=&[l_1,m_1]+[\alpha_L(l_2),\alpha_M(m_2)],\\
&\psi((m_1\ast l_1),(m_2\ast l_2))&=&[m_1,l_1]+[\alpha_M(m_2),\alpha_L(l_2)],\\
&\psi((l_1\ast m_1),(m_2\ast l_2))&=&[l_1,m_1]+[\alpha_M(m_2),\alpha_L(l_2)],
\end{array}\]
and
\[
\begin{array}{crcl}
&\psi_L(l_1\ast l_2)&=&[l_1,l_2],\\
&\psi_{L/M}(\overline{l_1}\ast\overline{l_2})&=&\overline{[l_1,l_2]},
\end{array}
\]
 for all $m_1, m_2\in M$, $l_1, l_2\in L$. Then, forgetting the Hom-Leibniz algebra structures, by using the Snake Lemma for the same diagram of vector spaces,  the assertion follows from  Remark \ref{H2} and the fact that there is a surjective map $\Ker(\psi) \to \Ker(\psi_2)$.
\end{proof}

Now suppose $\left( L, \alpha_{L}\right)$ is a perfect Hom-Lie algebra. By \cite[Theorem 3.4 and Remark 3.5]{CKP} we know that its universal central extension in the category {\sf Hom-Lie} is
\begin{equation}\label{eq}
(L\star L, \alpha_{L\star L})\overset{u_L}\twoheadrightarrow ( L, \alpha_{L}),
 \end{equation}
 and $\Ker (u_L)\approx H_{2}^{\alpha}(L)$, where $\star$ denotes the non-abelian Hom-Lie tensor product and $H_{2}^{\alpha}(L)$ is the second homology { with trivial coefficients} of the Hom-Lie algebra $\left( L, \alpha_{L}\right)$. Furthermore, since any perfect Hom-Lie algebra is a perfect Hom-Leibniz algebra as well, by Theorem \ref{teorema}  {\it f)}, $\left(  L, \alpha_{L}\right)$ admits  universal central extension in the category {\sf HomLb}, which is described via the non-abelian Hom-Leibniz tensor product in Theorem \ref{teor} and Remark \ref{H2}. Then,
  by the universal property of $(L\ast L,\alpha_{L\ast L})\overset{\psi_L}\twoheadrightarrow ( L, \alpha_{L})$ and
  considering (\ref{eq}) as a central extension of Hom-Leibniz algebras, we get a homomorphism of Hom-Leibniz algebras $\xi:(L\ast L, \alpha_{L\ast L})\to (L\star L, \alpha_{L\star L})$, $\xi({l\ast l'})={l\star l'}$, such that $\psi_L=u_L\circ \xi$. Note that $\xi$ is the epimorphism  described in Remark \ref{remarkLieLb} c).  Now the proof of the following proposition is obvious.
  \begin{proposition}
  Let $\left( L, \alpha_{L}\right)$ be a perfect Hom-Lie algebra, then the non-abelian Hom-Lie and Hom-Leibniz tensor squares  are isomorphic, $\left(  L \star L, \alpha_{L \star L} \right)\approx \left(  L \ast L,\right.$ $\left. \alpha_{L \ast L} \right)$, if and only if the second Hom-Lie and Hom-Leibniz homologies { with trivial coefficients} are isomorphic, $H_{2}^{\alpha}\left(  L\right) \approx HL_{2}^{\alpha}\left(  L\right)$.
  \end{proposition}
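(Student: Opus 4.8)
The plan is to exploit the homomorphism $\xi\colon(L\ast L,\alpha_{L\ast L})\to(L\star L,\alpha_{L\star L})$, $l\ast l'\mapsto l\star l'$, constructed just before the statement, which satisfies $\psi_L=u_L\circ\xi$. Since $(L,\alpha_L)$ is a perfect Hom-Lie algebra, it is in particular a perfect Hom-Leibniz algebra, so Theorem~\ref{teor} tells us that $(L\ast L,\alpha_{L\ast L})\overset{\psi_L}\twoheadrightarrow(L,\alpha_L)$ is the universal central extension in ${\sf HomLb}$ with kernel $HL_2^\alpha(L)$ (Remark~\ref{H2}), while $(L\star L,\alpha_{L\star L})\overset{u_L}\twoheadrightarrow(L,\alpha_L)$ is a central extension of Hom-Leibniz algebras with kernel $H_2^\alpha(L)$. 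The epimorphism $\xi$ thus restricts to an epimorphism of kernels $HL_2^\alpha(L)\twoheadrightarrow H_2^\alpha(L)$.

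First I would prove the ``only if'' direction: if $\xi$ is an isomorphism $(L\ast L,\alpha_{L\ast L})\approx(L\star L,\alpha_{L\star L})$ commuting with $\psi_L$ and $u_L$, then it maps $\Ker(\psi_L)$ bijectively onto $\Ker(u_L)$, hence $HL_2^\alpha(L)\approx\Ker(\psi_L)\approx\Ker(u_L)\approx H_2^\alpha(L)$. Conversely, suppose $H_2^\alpha(L)\approx HL_2^\alpha(L)$, equivalently $\Ker(\psi_L)\approx\Ker(u_L)$. Since $\xi$ restricted to $\Ker(\psi_L)$ surjects onto $\Ker(u_L)$ and both are finite-dimensional — or, without finiteness, one argues via the long exact sequences / the fact that $\xi$ is already an epimorphism — I would show $\xi$ is also injective, so that it is the desired isomorphism. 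The cleanest route: $\xi$ is surjective, so $L\ast L$ is a central extension of $L\star L$ (the kernel of $\xi$ lies in the kernel of $\psi_L=u_L\circ\xi$, which is central in $L\ast L$); but $L\star L$ is the universal central extension of $L$ in ${\sf HomLie}$, and every universal $\alpha$-central (in particular universal central) extension $(L\star L,\alpha_{L\star L})$ has the property that every central extension of it splits (Theorem~\ref{teorema}(d), applied in ${\sf HomLb}$ after noting $L\star L$ is perfect). Hence $\xi$ admits a Hom-Leibniz section $s\colon L\star L\to L\ast L$ with $\xi\circ s=\id$; then $s$ is injective and, since $L\ast L$ is perfect and $\psi_L=u_L\circ\xi$, uniqueness in the universal property of $\psi_L$ forces $s\circ\xi=\id$ as well, so $\xi$ is an isomorphism.

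An alternative, more bookkeeping-free argument in the converse direction simply compares dimensions (or, in general, uses that a surjection $HL_2^\alpha(L)\to H_2^\alpha(L)$ between isomorphic objects that fits into the commutative square is forced to be injective once one knows $L\ast L$ and $L\star L$ are both universal central extensions of the \emph{same} $L$ in the respective categories and invokes the uniqueness-up-to-isomorphism of universal central extensions together with the factorization $\psi_L = u_L\circ\xi$); I would present whichever is shortest given the conventions already fixed in the paper. The main obstacle is the converse: one must rule out that $\xi$ collapses part of $L\ast L$ while still inducing an isomorphism on kernels of $\psi_L$ and $u_L$. This is exactly where perfectness of $L$ (hence of $L\ast L$ and $L\star L$) and the splitting property of universal central extensions from Theorem~\ref{teorema}(d)--(e) do the work; once the section $s$ is produced and shown to be a two-sided inverse via the uniqueness clause, the proof closes. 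The remaining verifications — that $\Ker(\xi)$ is central, that $s$ is a Hom-Leibniz homomorphism, that $\alpha$'s are respected — are routine and I would leave them to the reader.
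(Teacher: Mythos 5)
Your forward direction is fine and is exactly what the paper leaves implicit: $\xi$ sits in a commutative ladder of short exact sequences over $\id_L$, so if $\xi$ is an isomorphism it restricts to an isomorphism $\Ker(\psi_L)\approx\Ker(u_L)$, i.e.\ $HL_2^{\alpha}(L)\approx H_2^{\alpha}(L)$. The converse is where there is a genuine gap. Your ``cleanest route'' never uses the hypothesis $H_{2}^{\alpha}(L)\approx HL_{2}^{\alpha}(L)$; if it were valid it would prove that $\xi$ is an isomorphism for \emph{every} perfect Hom-Lie algebra, which would make the proposition contentless (and is false already for $\alpha=\id$, where the Leibniz and Lie tensor squares of a perfect Lie algebra differ in general). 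The step that fails is the appeal to Theorem \ref{teorema} d) to conclude that every central extension of $(L\star L,\alpha_{L\star L})$ splits: that statement requires $(K,\alpha_K)\twoheadrightarrow(L,\alpha_L)$ to be a universal $\alpha$-central extension \emph{of Hom-Leibniz algebras}, whereas $u_L$ is only known to be the universal central extension in ${\sf HomLie}$. As a central extension in ${\sf HomLb}$ it is universal precisely when $\xi$ is an isomorphism --- which is what you are trying to prove --- so the section $s$ is not available and the argument is circular. (Your subsequent use of Theorem \ref{teorema} b) to upgrade a section to a two-sided inverse would be fine \emph{if} $s$ existed.)

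The argument the paper has in mind, and the reason it calls the proof obvious, is purely diagrammatic: since $\Ker(\xi)\subseteq\Ker(u_L\circ\xi)=\Ker(\psi_L)$ and $\xi$ is onto, the ladder yields a short exact sequence $0\to\Ker(\xi)\to\Ker(\psi_L)\to\Ker(u_L)\to 0$, i.e.\ $0\to\Ker(\xi)\to HL_2^{\alpha}(L)\to H_2^{\alpha}(L)\to 0$, so $\xi$ is an isomorphism if and only if the canonical surjection $HL_2^{\alpha}(L)\twoheadrightarrow H_2^{\alpha}(L)$ is one. Reading ``isomorphic'' on both sides of the statement as ``the canonical comparison map is an isomorphism,'' both implications follow at once. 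If one insists on abstract isomorphisms, the converse needs precisely the finite-dimensionality observation you mention in passing (a surjection between abstractly isomorphic vector spaces need not be injective in infinite dimension); that dimension-counting variant, not the splitting argument, is the one worth keeping.
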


Now we deal with the universal $\alpha$-central extension of { an $\alpha$-perfect} Hom-Leibniz algebra. We need the following notion.

 \begin{definition} \cite{CIP1}
 A Hom-Leibniz algebra $\left(  L, \alpha_{L}\right)$ is said to be $\alpha$-perfect if
 $$L = [\alpha_L(L), \alpha_L(L)]$$
 \end{definition}


 \begin{remark}\label{alfa perfecta} \
 \begin{enumerate}
 \item[a)] When $\alpha_L= \id$ the notions of perfect and $\alpha$-perfect Hom-Leibniz algebras are the same.
\item[b)] Any $\alpha$-perfect  Hom-Lie algebra is an $\alpha$-perfect  Hom-Leibniz algebra.
 \item[c)] Any $\alpha$-perfect Hom-Leibniz algebra is perfect. Nevertheless the converse is not true in general.  An example of this is given in \cite[Remark 3.8 b)]{CKP}.
 \item[d)] If  $\left(L, \alpha_{L}\right)$  is $\alpha$-perfect, then $L = \alpha_L(L)$, i.e. $\alpha_L$ is surjective.
 \end{enumerate}
 \end{remark}

It is shown in \cite[Theorem 5.5 (b)]{CIP1} that an $\alpha$-perfect Hom-Leibniz algebra admits a universal $\alpha$-central extension. Now we give  its description via the non-abelian Hom-Leibniz tensor product.

\begin{theorem}\label{alfa uce} \
 Let $\left(  L, \alpha_{L}\right)$ be an $\alpha$-perfect Hom-Leibniz algebra.  Then  the epimorphism $\psi : \left(  \alpha_L(L) \ast \alpha_L(L), \alpha_{\alpha_L(L) \ast \alpha_L(L)} \right) \twoheadrightarrow \left(  L, \alpha_{L}\right)$, $\psi \left(  \alpha_{L}\left(  l\right) \ast\alpha_{L}\left(  l^{\prime}\right)  \right)  =\left[  \alpha_{L}\left( {l}\right)  ,\alpha_{L}\left(  {l^{\prime}}\right)  \right] $,  is its universal $\alpha$-central extension.
\end{theorem}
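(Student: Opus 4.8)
The plan is to verify that $\psi$ is an $\alpha$-central extension and then check its universal property against an arbitrary $\alpha$-central extension, using the machinery already assembled. First I would observe that since $(L,\alpha_L)$ is $\alpha$-perfect we have $L=[\alpha_L(L),\alpha_L(L)]=\alpha_L(L)$ by Remark \ref{alfa perfecta} d), so $(\alpha_L(L),\alpha_{L\mid})=(L,\alpha_L)$ as Hom-Leibniz algebras and the action of $\alpha_L(L)$ on itself used to form the tensor square is just the bracket of $(L,\alpha_L)$ (compatibility being automatic, as in Example \ref{compatible} with $H=H'=L$). Thus $\psi$ is nothing but the map $\psi_L:(L\ast L,\alpha_{L\ast L})\twoheadrightarrow([L,L],\alpha_{L\mid})=(L,\alpha_L)$, $l\ast l'\mapsto[l,l']$, which is surjective precisely because $L$ is perfect. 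That it is a \emph{central} extension — and hence in particular an $\alpha$-central extension — follows from the bracket formulas (\ref{bracket}): for any generator $l\ast l'$ and any element of $\Ker(\psi_L)$, the four brackets in (\ref{bracket}) are of the form ${}^{\ast}{}\ast{}^{\ast}$ with one entry $[c,c']$ coming from a $\psi_L$-preimage, which vanishes; more conceptually this is Proposition \ref{action-on-tensor} d), since $\Ker(\psi_L)=\Ker(\psi_M)$ with $M=L$ lies in the center of $(L\ast L,\alpha_{L\ast L})$.

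Next I would establish the universal property. Let $(K,\alpha_K)\overset{\phi}\twoheadrightarrow(L,\alpha_L)$ be any $\alpha$-central extension, so $\alpha_M(M)\subseteq Z(K)$ where $M=\Ker(\phi)$. Because $L=[\alpha_L(L),\alpha_L(L)]$, every element of $L$ is a sum of brackets $[\alpha_L(l),\alpha_L(l')]$, and for such a bracket I can pick preimages: if $c,c'\in K$ satisfy $\phi(c)=l$, $\phi(c')=l'$, then $\phi([\alpha_K(c),\alpha_K(c')])=[\alpha_L(l),\alpha_L(l')]$. The key point is that $[\alpha_K(c),\alpha_K(c')]$ depends only on $l,l'$ and not on the choice of $c,c'$: if $\phi(c)=\phi(d)$ then $c-d\in M$, so $\alpha_K(c)-\alpha_K(d)=\alpha_K(c-d)\in\alpha_M(M)\subseteq Z(K)$, whence $[\alpha_K(c),\alpha_K(c')]=[\alpha_K(d),\alpha_K(c')]$, and symmetrically in the second variable. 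This well-definedness is exactly where $\alpha$-centrality (as opposed to plain centrality) is used, and it is the reason the construction goes through on the tensor square of $\alpha_L(L)$ rather than of $L$ directly. I would then define $f:(L\ast L,\alpha_{L\ast L})\to(K,\alpha_K)$ on generators by $f(\alpha_L(l)\ast\alpha_L(l'))=[\alpha_K(c),\alpha_K(c')]$ and check that $f$ respects all the defining relations (\ref{a)}) — this is a routine but somewhat lengthy verification matching the Hom-Leibniz action axioms of Definition \ref{Hom accion} against the relations, using $\alpha_M(M)\subseteq Z(K)$ repeatedly — and that $f$ is a Hom-Leibniz homomorphism compatible with the endomorphisms, i.e. $f\circ\alpha_{L\ast L}=\alpha_K\circ f$, which holds since $\alpha_K(c)$ is a preimage of $\alpha_L(l)$. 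By construction $\phi\circ f=\psi$.

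Finally, uniqueness of $f$: since $(L,\alpha_L)$ is $\alpha$-perfect it is in particular perfect, hence so is $(L\ast L,\alpha_{L\ast L})$ by (\ref{bracket}) (and in fact $\alpha$-perfect, again by (\ref{bracket}) together with surjectivity of $\alpha_L$); as $\psi$ is a central extension and $\phi$ an $\alpha$-central extension, Theorem \ref{teorema} c) gives at most one homomorphism $f$ with $\phi\circ f=\psi$. Therefore $\psi$ is the universal $\alpha$-central extension. I expect the main obstacle to be the relation-checking in the construction of $f$: one must show that the naive assignment on generators is forced to kill each of the sixteen-odd relations in (\ref{a)}), and the bookkeeping there — translating each tensor relation into an identity among iterated brackets in $K$ and then collapsing the central terms — is the only genuinely laborious part; the rest is an application of results already proved.
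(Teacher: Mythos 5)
Your proposal is correct and follows essentially the same route as the paper: verify that $\psi$ is a central (hence $\alpha$-central) extension, construct $f$ on generators by $\alpha_L(l)\ast\alpha_L(l')\mapsto[\alpha_K(c),\alpha_K(c')]$ with well-definedness coming from $\alpha_K(\Ker\phi)\subseteq Z(K)$, and obtain uniqueness from the $\alpha$-perfectness of the tensor square together with Theorem \ref{teorema} c). Your additional observation that $\alpha_L(L)=L$ so that $\psi$ coincides with $\psi_L$ is a harmless simplification the paper leaves implicit.
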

\begin{proof}
 Obviously $\psi$ is a central extension. For every $\alpha$-central extension $\left(  C,\alpha_{C}\right)  \overset{\phi}{\twoheadrightarrow}\left(  L,\alpha_{L}\right)$
there is a homomorphism $f:\left(  \alpha_L(L) \ast \alpha_L(L), \alpha_{\alpha_L(L) \ast \alpha_L(L)} \right)  \to \left(  C,\alpha_{C}\right) $ given on generators by $f\left(  \alpha_{L}\left(  l\right) \ast\alpha_{L}\left(  l^{\prime}\right)  \right)  =\left[  \alpha_{C}\left( c_{l}\right)  ,\alpha_{C}\left(  c_{l^{\prime}}\right)  \right] $, where $c_l$ and $c_{l'}$ are any elements in $\phi^{-1}(l)$ and $\phi^{-1}(l')$, respectively. $f$ is well-defined since  $\alpha_{C}\left(  \Ker\left(  \phi\right)  \right)  \subseteq Z\left(  C \right)$.
Obviously $\phi \circ f= \psi$ and $f \circ \alpha_{\alpha_L(L) \ast \alpha_L(L)} = \alpha_C \circ f$, having in mind that $\alpha_C(c_l) \in \phi^{-1}(\alpha_L(l))$ and $\alpha_C^2(c_l) \in \phi^{-1}(\alpha_L^2(l))$ for all $l \in L$.  Since $L$ is $\alpha$-perfect, then by equality (\ref{bracket}), so is $ \alpha_{L}(L)  \ast\alpha_{L}(L)$. Hence the homomorphism $f$ is unique by Theorem \ref{teorema} {\it c)}.
\end{proof}

An alternative construction of the universal $\alpha$-central extension $(\frak{uce}_{\alpha}^{\sf{Lb}}(L), \overline{\alpha})$ of an $\alpha$-perfect Hom-Leibniz algebra $\left(  L, \alpha_{L}\right)$ is given in  \cite[Theorem 5.5 (b)]{CIP1}. Let us recall that $\frak{uce}_{\alpha}^{\sf{Lb}}(L)$ is the quotient of ${\alpha_L(L) \otimes \alpha_L(L)}$ by the vector subspace spanned by all elements $-[x_1,x_2] \otimes \alpha_L(x_3) + [x_1,x_3] \otimes \alpha_L(x_2) +  \alpha_L(x_1) \otimes [x_2,x_3]$, where $x_1, x_2, x_3 \in L$, the bracket of $\frak{uce}_{\alpha}^{\sf{Lb}}(L)$ is given on generators by
\[
[\alpha_L(x_1)\otimes \alpha_L(x_2), \alpha_L(y_1)\otimes \alpha_L(y_2)]=[\alpha_L(x_1), \alpha_L(x_2)]\otimes [\alpha_L(y_1), \alpha_L(y_2)],
\]
$x_1,x_2,y_1,y_2\in L$,
 and $\overline{\alpha}:\frak{uce}_{\alpha}^{\sf{Lb}}(L)\to \frak{uce}_{\alpha}^{\sf{Lb}}(L)$ is induced by $\alpha_L$.

Since the universal $\alpha$-central extension of an $\alpha$-perfect Hom-Leibniz algebra $\left(  L, \alpha_{L}\right)$  is unique up to isomorphisms, then there is an isomorphism of Hom-Leibniz algebras
 \[
 \left(  \alpha(L) \ast \alpha(L), \alpha_{\alpha(L) \ast \alpha(L)} \right) \approx (\frak{uce}_{\alpha}^{\sf{Lb}}(L), \overline{\alpha}).
 \]

Now suppose that $\left(  L,\alpha_{L}\right)$  is an $\alpha$-perfect Hom-Lie algebra. By \cite[Theorem 5.5 (a)]{CIP1}  it admits a universal $\alpha$-central extension in the category {\sf{HomLie}} of Hom-Lie algebras, which is described as the non-abelian Hom-Lie tensor product $\left(  \alpha(L) \star \alpha(L), \alpha_{\alpha(L) \star \alpha(L)} \right)$ in \cite[Theorem 3.11]{CKP}. $\left(  L,\alpha_{L}\right)$ admits a universal $\alpha$-central extension in the category {\sf{HomLb}} described in Theorem \ref{alfa uce} via non-abelian Hom-Leibniz tensor product.
Then \cite[Proposition 5.6]{CIP1} provides the following relationship between the non-abelian Hom-Lie and Hom-Leibniz tensor products:
\[
\left(  \alpha(L) \star \alpha(L), \alpha_{\alpha(L) \star \alpha(L)} \right) \approx  \left( \left( \alpha(L) \ast \alpha(L)\right)_{\sf{Lie}}, \overline{\alpha}_{\alpha(L) \ast \alpha(L)} \right).
\]
 Moreover, $\left( \alpha(L) \star \alpha(L), \alpha_{\alpha(L) \star \alpha(L)}\right)$ is an $\alpha$-perfect Hom-Lie algebra and  its universal $\alpha$-central extension  in the category  {\sf{HomLb}} of Hom-Leibniz algebras is isomorphic to  $\left(  \alpha(L) \ast \alpha(L), \alpha_{\alpha(L) \ast \alpha(L)} \right)$.


\section{Application in Hochschild homology of Hom-associative algebras }\label{section5}

\begin{definition}
A (multiplicative) Hom-associative algebra $(A, \alpha_A)$ is a vector space $A$ together with linear maps $\alpha_A:A\to A$ and $A\otimes A\to A$, $a\otimes b \mapsto ab$, such that
\begin{align*}
&\alpha_A(a)(bc)=(ab)\alpha_A(c),\\
&\alpha_A(ab)=\alpha_A(a)\alpha_A(b)
\end{align*}
for all $a,b,c\in A$.
\end{definition}

 Any Hom-associative algebra $(A,\alpha _{A})$ can be viewed as a Hom-Leibniz (in fact Hom-Lie) algebra  with the bracket defined by $\left[ a,b\right] :=ab-ba$, $a,b\in A$ (see Example 5.2 in \cite{Yau1}).

 Given a Hom-associative algebra $(A,\alpha _{A})$, we denote $\mathbb{L}^{\alpha}(A):=(A{\otimes}A)/\Image(b_3)$, where $b_{3}:A\otimes A\otimes A\to A\otimes A$, $b_{3}\left( a\otimes b\otimes c\right) =ab\otimes \alpha _{A}(c)-\alpha_{A}(a)\otimes bc+ca\otimes \alpha _{A}(b)$, is the boundary map for the Hochschild complex of the Hom-associative algebra $(A,\alpha _{A})$ constructed in \cite{Yau1}. Then  we have the following short exact sequence of
vector spaces
  \begin{equation}\label{eq3}
0 \longrightarrow HH_1^{\alpha}(A) \stackrel{i}\longrightarrow \mathbb{L}^{\alpha }(A) \stackrel{\phi}\longrightarrow [A,A] \longrightarrow 0\ ,
  \end{equation}
  where  $HH_1^{\alpha}(A)$ denotes the first Hochschild homology of $(A,\alpha _{A})$ \cite{Yau1},  $[A,A]$ is the subspace of $A$ spanned by the elements $ab-ba$  and $\phi(a \boxtimes b) = ab - ba$, $a,b\in A$. Here $a\boxtimes b$ stands for the equivalence class of $a\otimes b$.

Suppose $\overline{\alpha}_{A}:\mathbb{L}^{\alpha}(A)\to\mathbb{L}^{\alpha}(A)$ is the linear map induced by $\alpha_A$, i.e. $\overline{\alpha}_{A}({a \boxtimes b}) = {\alpha_A(a) \boxtimes \alpha_A(b)}$. Then there is a Hom-Leibniz algebra structure on $(\mathbb{L}^{\alpha}(A), \overline{\alpha}_{A})$
 given by the bracket
\[
\left[ {a\boxtimes b},{a^{\prime }\boxtimes b^{\prime }}\right] :={\left( ab-ba\right) \boxtimes \left( a^{\prime }b^{\prime}-b^{\prime }a^{\prime }\right)} = {\left[ a,b\right]\boxtimes \left[ a^{\prime },b^{\prime }\right]},
\]
for all $a,b,a^{\prime },b^{\prime }\in A$. Indeed, it is immediate that
\begin{equation}\label{eq2}
[a,b]\boxtimes \alpha _{A}(c)-\alpha_{A}(a)\boxtimes [b,c]+[c,a]\boxtimes \alpha _{A}(b)=0,
\end{equation}
 then the Hom-Leibniz identity (\ref{def}) easily follows.

\begin{remark}\label{remarkNote}
 $\left( \mathbb{L}^{\alpha }(A),\overline{\alpha}_{A}\right)$ is the quotient of the non-abelian Hom-Leibniz tensor product $\left( A\ast A,\alpha _{A\ast A}\right)$ by the two-sided Hom-ideal generated by the elements of the form $ab\ast \alpha _{A}(c)-\alpha _{A}(a)\ast bc+ca\ast \alpha _{A}(b),$ for all $a,b,c\in A.$
\end{remark}

\begin{definition} \cite{CKP}
We say that a Hom-associative algebra $(A, \alpha_A)$ satisfies the $\alpha$-identity condition if
\begin{equation} \label{alpha identity}
[A, \Image(\alpha_A - \id_A)]=0\ ,
\end{equation}
where $[A, \Image(\alpha_A - \id_A)]=0$ is the subspace of $A$ spanned by all elements $ab-ba$, with $a \in A$ and $b \in  \Image(\alpha_A - \id_A)$.
\end{definition}

Examples of Hom-associative algebras satisfying the $\alpha$-identity condition can be found in \cite{CKP}.

\begin{proposition} \label{main} Let $(A,\alpha _{A})$ be a Hom-associative algebra.
\begin{enumerate}
\item[a)] There is a Hom-Leibniz action of $(A,\alpha _{A})$ on $\left( \mathbb{L}^{\alpha }(A),\overline{\alpha}_{A}\right)$ given, for all $a^{\prime }\in A$, $a\boxtimes b\in \mathbb{L}^{\alpha }(A),$ by
\begin{align*}
&{}^{a^{\prime }}\left( a\boxtimes b\right) =\left[ a^{\prime },a\right] \boxtimes \alpha _{A}\left( b\right) -\left[ a^{\prime },b
\right] \boxtimes \alpha _{A}\left( a\right),\\
&\left( a\boxtimes b\right) ^{a^{\prime }}=\left[ a,a^{\prime } \right] \boxtimes \alpha _{A}\left( b\right) +\alpha _{A}\left( a\right) \boxtimes \left[ b,a^{\prime }\right],
\end{align*}
and a Hom-Leibniz action of $\left( \mathbb{L}^{\alpha }(A),\overline{\alpha}_{A}\right)$  on $(A,\alpha _{A})$ given by
\begin{align*}
&{}^{\left( {a\boxtimes b}\right) }a^{\prime }=\left[ \left[a,b\right] ,a^{\prime }\right],\\
&a^{\prime \left( {a\boxtimes b}\right) }=\left[ a^{\prime} ,\left[ a,b\right] \right].
\end{align*}

Moreover, these actions are compatible if $(A,\alpha _{A})$  satisfies the $\alpha$-identity condition (\ref{alpha identity}).

\item[b)] There is a short exact sequence of Hom-Leibniz algebras
\[
0 \to (HH_1^{\alpha}(A), \overline{\alpha}_{A \mid}) \stackrel{i}\to \left( \mathbb{L}^{\alpha }(A),\overline{\alpha}_{A}\right) \stackrel{\phi} \to ([A,A],\alpha_{A \mid}) \to 0
\]
where $(HH_1^{\alpha}(A), \alpha_{A \mid})$ is considered as an abelian Hom-Leibniz algebra, $\overline{\alpha}_{A \mid}$ (resp. $\alpha_{A \mid}$) is the restriction of $\overline{\alpha} _{A}$ (resp. $\alpha_A$) and $\phi(a \boxtimes b) =[a,b]=ab - ba$.

\item[c)] The induced Hom-Leibniz action of $(A,\alpha _{A})$  on $(HH_1^{\alpha}(A), \alpha_{A \mid})$ is trivial. Moreover, $i$ always preserves the actions and, if $(A,\alpha _{A})$ satisfies the $\alpha$-identity condition (\ref{alpha identity}), then $\phi$ also preserves the actions.

\end{enumerate}
\end{proposition}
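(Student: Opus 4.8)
The plan is to verify each of the three assertions by direct computation, relying on the Hom-Leibniz axioms for $(A,\alpha_A)$ viewed as a Hom-Leibniz algebra via $[a,b]=ab-ba$, together with the defining relation \eqref{eq2} that kills $\Image(b_3)$ in $\mathbb{L}^{\alpha}(A)$. For part a), I would check each of the eight axioms a)--h) of Definition \ref{Hom accion} for the pair of proposed actions. The axioms g) and h) follow immediately from the fact that $\overline{\alpha}_A$ is induced by $\alpha_A$ and $\alpha_A$ is multiplicative, so that $\alpha_A[x,y]=[\alpha_A(x),\alpha_A(y)]$. The remaining axioms a)--f) reduce, after substituting the formulas for ${}^{a'}(a\boxtimes b)$, $(a\boxtimes b)^{a'}$, ${}^{(a\boxtimes b)}a'$ and $a'^{(a\boxtimes b)}$, to identities among nested brackets in $A$; these are consequences of the Hom-Leibniz identity \eqref{def} for $(A,\alpha_A)$ and, where the first slot of $\mathbb{L}^{\alpha}(A)$ is involved, of relation \eqref{eq2}. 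For the compatibility claim, I would expand both sides of the twelve equations in \eqref{compatibility} using the definitions; the two sides will differ by terms of the form $[a,b']$ with $b'\in\Image(\alpha_A-\id_A)$ (they arise precisely because the composite actions mix an $\alpha_A$ coming from the action formula with an identity coming from the bracket in $A$), and these vanish exactly under the $\alpha$-identity condition \eqref{alpha identity}.

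For part b), the underlying short exact sequence of vector spaces is \eqref{eq3}, so it remains to check that $i$ and $\phi$ are Hom-Leibniz homomorphisms and that $(HH_1^{\alpha}(A),\overline{\alpha}_{A\mid})$ inherits the abelian structure. That $\phi$ is a homomorphism is the computation $\phi[a\boxtimes b,a'\boxtimes b']=\phi([a,b]\boxtimes[a',b'])=[[a,b],[a',b']]=[\phi(a\boxtimes b),\phi(a'\boxtimes b')]$, and $\phi\circ\overline{\alpha}_A=\alpha_{A\mid}\circ\phi$ is immediate. Since $HH_1^{\alpha}(A)=\Ker(\phi)$ and the bracket on $\mathbb{L}^{\alpha}(A)$ is $[a\boxtimes b,a'\boxtimes b']=[a,b]\boxtimes[a',b']$, any element of $\Ker(\phi)$ brackets trivially on both sides (as $[a,b]=0$ forces $[a,b]\boxtimes[a',b']=0$), so $\Ker(\phi)$ is a central — in particular abelian — Hom-ideal, and $i$ is a homomorphism of Hom-Leibniz algebras onto it. Stability of $HH_1^{\alpha}(A)$ under $\overline{\alpha}_A$ is clear from $\phi\circ\overline{\alpha}_A=\alpha_{A\mid}\circ\phi$.

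For part c), I would first observe that the action of $(A,\alpha_A)$ on $HH_1^{\alpha}(A)$ induced from part a) sends $a'\otimes(a\boxtimes b)$ to $[a',a]\boxtimes\alpha_A(b)-[a',b]\boxtimes\alpha_A(a)$, which, when $a\boxtimes b\in\Ker(\phi)$, i.e. $[a,b]=0$, I claim equals zero in $HH_1^{\alpha}(A)$ using relation \eqref{eq2} (rewriting $[a',a]\boxtimes\alpha_A(b)-[a',b]\boxtimes\alpha_A(a)$ via \eqref{eq2} as $\pm\alpha_A(a')\boxtimes[a,b]$, which vanishes) — and symmetrically for the right action; this gives triviality. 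That $i$ preserves the actions is then automatic since both the source action (of $A$ on $HH_1^{\alpha}(A)$) and the target action restricted to the image are described by the same formulas. That $\phi$ preserves the actions, under the $\alpha$-identity condition, amounts to checking $\phi({}^{a'}(a\boxtimes b))={}^{a'}\phi(a\boxtimes b)$ and the three companion equations, i.e. $[[a',a],b]\!-\![[a',b],a]=[a',[a,b]]$ up to the usual error term lying in $[A,\Image(\alpha_A-\id_A)]$; this is again the Hom-Leibniz identity for $A$ corrected by \eqref{alpha identity}.

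\emph{The main obstacle} is the compatibility part of a) and the action-preservation part of c): in both, the naive identities fail for a general Hom-associative algebra, and the entire point is to track precisely which discrepancy terms appear and to recognize them as elements of $[A,\Image(\alpha_A-\id_A)]$, so that the $\alpha$-identity condition \eqref{alpha identity} is exactly what is needed — no more and no less. Everything else is a routine, if lengthy, expansion using \eqref{def} and \eqref{eq2}.
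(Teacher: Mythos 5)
Your plan is correct and follows essentially the same route as the paper: direct verification of the action axioms and compatibility conditions using the Hom-Leibniz identity (\ref{def}) and relation (\ref{eq2}), the reduction ${}^{a'}(a\boxtimes b)=\alpha_A(a')\boxtimes[a,b]$ to get triviality on $\Ker(\phi)$, and the observation that all discrepancy terms lie in $[A,\Image(\alpha_A-\id_A)]$ so that the $\alpha$-identity condition is exactly what is needed. The only point to be slightly more careful about is stating explicitly that the proposed actions are well defined on the quotient $\mathbb{L}^{\alpha}(A)=(A\otimes A)/\Image(b_3)$, which again follows from (\ref{eq2}).
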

\begin{proof}
 { a)}  By using formula (\ref{eq2}) and Remark \ref{remarkNote}, one can readily check that the actions are well-defined Hom-Leibniz actions. It is also easy to see that these actions are compatible under condition (\ref{alpha identity}). For example, we have
 \begin{align*}
 {}^{({}^{a'}(a\boxtimes b))}{a''}&= {}^{[a',a]\boxtimes \alpha_A(b)}a''-{}^{[a',b]\boxtimes \alpha_A(a)}a'' = \big[[[a',a],\alpha_A(b)]-[[a',b],\alpha_A((a))],a''\big]\\
 & \overset{(\ref{def})}=\big[[\alpha_A(a'),[a,b]], a''\big] \overset{(\ref{alpha identity})}= \big[[a',[a,b]], a''\big] = [a'^{\ a\boxtimes b}, a''].
 \end{align*}

{\it b)} It is an immediate consequence of (\ref{eq3}) and the definitions above.

{\it c)}  Since ${}^{a^{\prime }}\left( a\boxtimes b\right) =\alpha_A(a^{\prime }) \boxtimes [a, b] =0$ when $a\boxtimes b\in \Ker(\phi) $, we deduce that
the action of $(A,\alpha _{A})$ on $(HH_1^{\alpha}(A), \overline{\alpha}_{A \mid})$ is trivial and, by this reason,  $i$ preserves the actions of $(A,\alpha _{A})$. Now, if  $(A,\alpha _{A})$ satisfies the $\alpha$-identity condition (\ref{alpha identity}), then we have
\begin{align*}
&\phi\big({}^{a'}(a\boxtimes b) \big)=\big[\alpha_A(a'),[a,b] \big]\overset{(\ref{alpha identity})}{=}\big[a',[a,b] \big]={}^{a'}\phi(a\boxtimes b),\\
&\phi\big((a\boxtimes b){}^{a'} \big)=\big[[a,b],\alpha_A(a') \big]\overset{(\ref{alpha identity})}{=}\big[[a,b],a' \big]=\phi(a\boxtimes b){}^{a'},
\end{align*}
for all $a',a, b\in A$, implying that $\phi$  also preserves the actions.
\end{proof}

\begin{remark}
The statements in {Proposition} \ref{main} when $\alpha_A = \id_A$ recover the corresponding results established in \cite[Section 7]{Gn} for Leibniz algebras.
\end{remark}

In the following definition we introduce the Hom-version of the first Milnor-type Hochschield homology of associative algebras (see e.g. \cite{Gn}).

\begin{definition}
Let $(A,\alpha_A)$ be a Hom-associative algebra. The first Milnor-type Hochschield homology  $HH_1^M(A,\alpha_A)$ is the quotient of the  vector space  $A\otimes A$ by the relations
\begin{align*}
 ab\otimes \alpha_A(c)- \alpha_A(a)\otimes bc+ca\otimes \alpha_A(b) &=0,\\
\alpha_A(a)\otimes bc -\alpha_A(a)\otimes cb &=0,\\
ab \otimes \alpha_A(c) - ba \otimes \alpha_A(c) &=0,
\end{align*}
for any $a, b, c \in A$.
\end{definition}

\begin{theorem}\label{application}
Let $(A,\alpha_A)$ be a Hom-associative (non-commutative) algebra satisfying the $\alpha$-identity condition (\ref{alpha identity}). Then there is an exact sequence of vector spaces
\begin{align*}
A\ast HH_1^{\alpha}(A) &\to  \Ker(A \ast \mathbb{L}^{\alpha}(A) \to \mathbb{L}^{\alpha}(A)) \to
\Ker(A \ast[A,A] \to [A,A]) \\
&\to   HH^{\alpha}_1(A)\to HH_1^M(A,\alpha_A) \to {[A,A]}/{[A,[A,A]]}\to 0.
\end{align*}
\end{theorem}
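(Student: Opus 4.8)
The plan is to obtain the sequence by applying the Snake Lemma to a two-row commutative ladder: the bottom row is the short exact sequence (\ref{eq3}), read as a short exact sequence of Hom-Leibniz algebras via Proposition \ref{main} b), and the top row is its image under the functor $A\ast(-)$, where $(A,\alpha_A)$ is viewed as a Hom-Leibniz algebra with bracket $[a,b]=ab-ba$.

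First I would fix the actions. By Proposition \ref{main} a), $(A,\alpha_A)$ and $(\mathbb{L}^\alpha(A),\overline{\alpha}_A)$ act compatibly on each other because $(A,\alpha_A)$ satisfies the $\alpha$-identity condition (\ref{alpha identity}); the two-sided Hom-ideal $([A,A],\alpha_{A\mid})$ carries the restricted bracket action, which is compatible with $(A,\alpha_A)$ by Example \ref{compatible}; and between $(HH_1^\alpha(A),\overline{\alpha}_{A\mid})$ and $(A,\alpha_A)$ both actions are trivial, hence compatible --- the action of $A$ on $HH_1^\alpha(A)$ by Proposition \ref{main} c), and the reverse action because the action of $\mathbb{L}^\alpha(A)$ on $A$ factors through $\phi$ (Proposition \ref{main} a)) and so restricts to the trivial action on $\Ker(\phi)\cong HH_1^\alpha(A)$. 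Since $i$ and $\phi$ preserve these actions (Proposition \ref{main} c)), Proposition \ref{exact-tensor-1} applied to (\ref{eq3}) with $N=(A,\alpha_A)$ gives the exact row
\[
A\ast HH_1^\alpha(A)\longrightarrow A\ast\mathbb{L}^\alpha(A)\longrightarrow A\ast[A,A]\longrightarrow 0 .
\]

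Next I would stack this over (\ref{eq3}) using the homomorphisms $\psi_1$ of Proposition \ref{action-on-tensor} a), one for each of the pairs $(\mathbb{L}^\alpha(A),A)$, $([A,A],A)$ and $(HH_1^\alpha(A),A)$, landing respectively in $\mathbb{L}^\alpha(A)$, $[A,A]$ and $HH_1^\alpha(A)$ --- these are exactly the maps occurring inside the kernels in the statement. Naturality of $\psi_1$, together with the fact that $i$ and $\phi$ preserve the actions, makes the two resulting squares commute. Forgetting the Hom-Leibniz algebra structures and invoking the Snake Lemma (as in the proof of Theorem \ref{sucesion exacta}) yields an exact sequence
\begin{align*}
A\ast HH_1^\alpha(A) &\longrightarrow \Ker\big(A\ast\mathbb{L}^\alpha(A)\to\mathbb{L}^\alpha(A)\big)\longrightarrow \Ker\big(A\ast[A,A]\to[A,A]\big)\\
&\longrightarrow HH_1^\alpha(A)\longrightarrow \Coker\big(A\ast\mathbb{L}^\alpha(A)\to\mathbb{L}^\alpha(A)\big)\longrightarrow [A,A]/[A,[A,A]]\longrightarrow 0,
\end{align*}
where I have used that $A\ast HH_1^\alpha(A)\to HH_1^\alpha(A)$ is the zero map (trivial $A$-action), that the last arrow is onto because $\phi$ is, and that the image of $A\ast[A,A]\to[A,A]$ is the span of the brackets $[m,n]$ and $[n,m]$ with $m\in[A,A]$, $n\in A$, that is, $[A,[A,A]]$.

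The remaining and, I expect, main step is to identify $\Coker\big(A\ast\mathbb{L}^\alpha(A)\to\mathbb{L}^\alpha(A)\big)$ with $HH_1^M(A,\alpha_A)$. By the formulas of Proposition \ref{main} a), the image of $A\ast\mathbb{L}^\alpha(A)\to\mathbb{L}^\alpha(A)$ is spanned by the elements ${}^{a'}(a\boxtimes b)=[a',a]\boxtimes\alpha_A(b)-[a',b]\boxtimes\alpha_A(a)$ and $(a\boxtimes b)^{a'}=[a,a']\boxtimes\alpha_A(b)+\alpha_A(a)\boxtimes[b,a']$; I claim this image equals the subspace $S$ of $\mathbb{L}^\alpha(A)$ generated by all $[x,y]\boxtimes\alpha_A(z)$ and all $\alpha_A(x)\boxtimes[y,z]$. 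One inclusion is immediate from the two displayed formulas. For the reverse one, relation (\ref{eq2}) turns ${}^a(b\boxtimes c)=[a,b]\boxtimes\alpha_A(c)-[a,c]\boxtimes\alpha_A(b)$ into $\alpha_A(a)\boxtimes[b,c]$, so every $\alpha_A(x)\boxtimes[y,z]$ lies in the image, and then $[a,a']\boxtimes\alpha_A(b)=(a\boxtimes b)^{a'}-\alpha_A(a)\boxtimes[b,a']$ shows the same for every $[x,y]\boxtimes\alpha_A(z)$. Since $\mathbb{L}^\alpha(A)=(A\otimes A)/\Image(b_3)$ and the two generating families of $S$ are precisely the extra relations $\alpha_A(a)\otimes bc-\alpha_A(a)\otimes cb$ and $ab\otimes\alpha_A(c)-ba\otimes\alpha_A(c)$ defining $HH_1^M(A,\alpha_A)$, we obtain $\mathbb{L}^\alpha(A)/S\cong HH_1^M(A,\alpha_A)$. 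Substituting the six identifications into the Snake sequence gives the asserted exact sequence; its map $HH_1^\alpha(A)\to HH_1^M(A,\alpha_A)$ is induced by $i$ followed by the quotient, and $HH_1^M(A,\alpha_A)\to[A,A]/[A,[A,A]]$ by $\phi$, both automatic from the construction.
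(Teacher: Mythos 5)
Your proposal is correct and follows essentially the same route as the paper: apply Proposition \ref{exact-tensor-1} to the short exact sequence of Proposition \ref{main} b) to get the top row, map it down to that sequence via the action homomorphisms of Proposition \ref{action-on-tensor}, identify $\Coker$ of the middle column with $HH_1^M(A,\alpha_A)$ and of the right column with $[A,A]/[A,[A,A]]$, and invoke the Snake Lemma. You in fact supply details the paper leaves implicit (the compatibility checks and the verification that the image of $A\ast\mathbb{L}^{\alpha}(A)\to\mathbb{L}^{\alpha}(A)$ is exactly the subspace killed in $HH_1^M(A,\alpha_A)$), so no further changes are needed.
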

\begin{proof}
By using { Proposition \ref{main}} and Proposition \ref{exact-tensor-1} we have the commutative diagram of Hom-Leibniz algebras (written without $\alpha$ endomorphisms)
\[
 \xymatrix{
  & A\ast HH_1^{\alpha}(A) \ar[r]  \ar[d]^{\psi_{2\mid}}& A \ast \mathbb{L}^{\alpha}(A) \ar[r] \ar[d]^{\psi_2} & A\ast [A,A] \ar[r]  \ar[d]^{{{\psi}_3}}& 0\\
0 \ar[r] & HH_1^{\alpha}(A) \ar[r] & \mathbb{L}^{\alpha}(A) \ar[r]^{\phi} & [A,A] \ar[r] & 0,
}
\]
where $\psi$ homomorphisms are defined as in { Proposition} \ref{action-on-tensor}.
 $\psi_2(a \ast (x \otimes y))= {^a}(x \otimes y)$, $\psi_2((x \otimes y) \ast a )= (x \otimes y){^a}$ and ${{\psi}_3}(a \ast [x, y])= [\alpha_A(a), [x, y]]$, $\overline{\psi}_2((x \otimes y) \ast a )= [[x, y], \alpha_A(a)]$.

Thus  $\Coker({\psi}_3)= [A,A]/ [A,[A,A]]$, $\Coker(\psi_2)=HH_1^M(A,\alpha_A)$, $\Coker({\psi_{2 \mid}})=HH_1^{\alpha}(A)$ and $\Ker(\psi_{2 \mid})=A\ast HH_1^{\alpha}(A)$. Then the assertion is a consequence of the Snake Lemma.
\end{proof}

\begin{remark}\
\begin{enumerate}
\item[a)]Let $(A,\alpha_A)$ be a Hom-associative (non-commutative) algebra satisfying the $\alpha$-identity condition (\ref{alpha identity}). If in addition $\alpha_A$ is an epimorphism, then the term $A\ast HH_1^{\alpha}(A)$ in the exact sequence of Theorem \ref{application} can be replaced by $\big(A/[A,A]\otimes HH_1^{\alpha}(A)\big) \oplus \big(HH_1^{\alpha}(A) \otimes A/[A,A]\big)$  since they are isomorphic by Proposition \ref{tensor abel}. In Particular, if $\alpha_A=\id_A$, the exact sequence in Theorem \ref{application} coincides with that of \cite[Theorem 7.4]{Gn}.

\item[b)]
From the exact sequence in Theorem \ref{application} and Theorem \ref{teor} immediately follows that the  vector spaces  $HH^{\alpha}_1(A)$ and $HH_1^M(A,\alpha_A)$ are isomorphic when the Hom-associative algebra $(A,\alpha_A)$ is perfect as Hom-Leibniz algebra and $HL_2^{\alpha}(A) = 0$.

\item[c)] $HH^{\alpha}_1(A)$ and $HH_1^M(A,\alpha_A)$ are also isomorphic for any commutative Hom-associative algebra  $(A,\alpha_A)$.
\end{enumerate}
\end{remark}


\section*{Acknowledgements}

First and second authors were supported by Ministerio de Econom\'{\i}a y Competitividad (Spain) (European FEDER support included), grant MTM2013-43687-P.  The second author was supported by Xunta de Galicia, grants EM2013/016 and GRC2013-045 (European FEDER support included), and  Shota Rustaveli National Science Foundation, grant FR/189/5-113/14.

\begin{center}

\end{center}


\begin{thebibliography}{7}

\bibitem{AMM} F. Ammar, S. Mabrouk and A. Makhlouf, {\it Representations  and cohomology of n-ary multiplicative Hom-Nambu-Lie algebras}, { J. Geom. Phys.} {\bf 61} (10) (2011),  1898--1913.

\bibitem{BM} S. Benayadi and A. Makhlouf, {\it Hom-Lie algebras with symmetric invariant nondegenerate bilinear forms}, { J. Geom. Phys.} {\bf 76} (2014),  38--60.

\bibitem{CIP1} J. M. Casas, M. A. Insua and N. Pacheco Rego, {\it On universal central extensions of Hom-Leibniz algebras}, J. Algebra Appl. {\bf 13} (8) (2014),  1450053 (22 pp.).

 \bibitem{CKP} J. M. Casas, E. Khmaladze and N. Pacheco Rego, {\it A non-abelian tensor product  of Hom-Lie algebras}, Bull. Malaysian Math. Sci. Soc  (2016), to appear.

\bibitem{CVdL} {J. M. Casas and T. Van der Linden}, {\it Universal central extensions in semi-abelian categories}, Appl. Categor. Struct. {\bf 22} (1) (2014), 253--268.

\bibitem{ChS} D. Chen and Y. Sheng, {\it Hom-Lie 2-algebras}, J. Algebra {\bf 376} (2013), 174--195.

\bibitem{ChSu} Y. S. Chen and Y. C. Su, {\it (Co)homology and universal central extensions of Hom-Leibniz algebras}, Acta Math. Sin. (Engl. Ser.) {\bf 27} (5)  (2011), 813--830.

\bibitem{El1} G. Ellis, {\it A non-abelian tensor product of Lie algebras}, Glasgow Math. J. {\bf 33} (1991), 101--120.

\bibitem{Gn} A. V. Gnedbaye, {\it A non-abelian tensor product of Leibniz algebras}, {Ann. Inst. Fourier, Grenoble} {\bf 49} (4) (1999), 1149--1177.

\bibitem{HLS} J. T. Hartwing, D. Larson and S. D. Silvestrov, {\it Deformations of Lie algebras using $\sigma$-derivations}, J. Algebra {\bf 295}  (2006), 314--361.

    \bibitem{InKhLa} N. Inassaridze, E. Khmaladze and M. Ladra, {\it Non-abelian homology of Lie algebras}, Glasgow Math. J. {\bf 46} (2004), 417--429.

\bibitem{Lo} {J.-L. Loday}, {\it Une version non commutative des alg\`ebres de Lie: les
alg\`ebres de Leibniz}, {L'Enseignement Math\'ematique} {\bf 39}
(1993), 269--292.

\bibitem{LP} {J.-L. Loday and T. Pirashvili}, {\it  Universal enveloping algebras of Leibniz algebras and (co)homology}, {Math. Ann.} {\bf 296} (1993), 139--158.

\bibitem{MS} A. Makhlouf and S. Silvestrov, {\it Hom-algebra structures}, J. Gen. Lie Theory Appl. {\bf 2} (2) (2008), 51--64.

\bibitem{MS3} A. Makhlouf and S. Silvestrov, {\it Hom-Lie admissible Hom-coalgebras and Hom-Hopf algebras}, { Generalized Lie theory in mathematics, physics and beyond}, (Springer, Berlin 2009) 189--206.

\bibitem{MS2} A. Makhlouf  and S. Silvestrov, {\it Notes on 1-parameter formal deformations of Hom-associative and Hom-Lie algebras},  Forum Math. {\bf 22} (4) (2010), 715--739.

\bibitem{Sheng} Y. Sheng,  {\it Representations of Hom-Lie algebras},  Algebr. Represent. Theory {\bf 15} (6) (2012), 1081--1098.

\bibitem{Yau} D. Yau, {\it Enveloping algebras of Hom-Lie algebras}, {J. Gen. Lie Theory Appl.} {\bf 2} (2) (2008), 95--108.

\bibitem{Yau1}  D. Yau,  {\it Hom-algebras  and homology}, {\rm J. Lie Theory} {\bf 19} (2) (2009), 409--421.

\end{thebibliography}
\end{document}